\documentclass{article}
\usepackage[utf8]{inputenc}
\usepackage[T1]{fontenc}
\usepackage{authblk}
\usepackage{setspace}
\usepackage[margin=1.25in]{geometry}
\usepackage{graphicx}
\graphicspath{ {./figures/} }
\usepackage{booktabs}
\usepackage{subcaption}
\usepackage{multirow}
\usepackage{float}
\usepackage{amsmath}
\usepackage{lineno}
\usepackage{booktabs}
\usepackage{amssymb}
\newcommand{\E}{\mathbb{E}}
\newcommand{\N}{\mathbb{N}}

\newcommand{\R}{\mathbb{R}}
\usepackage{graphicx}
\usepackage{float}
\usepackage{xcolor}
\usepackage{hyperref}
\usepackage{bbm}
\numberwithin{equation}{section}
\usepackage{yfonts}
\usepackage[font=small,labelfont=bf]{caption}
\usepackage[justification=centering]{caption}
\usepackage{booktabs}
\usepackage{minted}
\usepackage[dvipsnames]{xcolor}
\usepackage{subcaption}
\usepackage{relsize}
\definecolor{llgray}{gray}{0.9}
\newtheorem{theorem}{Theorem}[section] 

\newtheorem{assumption}{Assumption}

\newtheorem{definition}{Definition}[section]

\newtheorem{lemma}[theorem]{Lemma}

\newtheorem{remark}[theorem]{Remark}

\newenvironment{proof}[1][Proof]{\noindent\textbf{#1.} }{\ \rule{0.5em}{0.5em}}

\usepackage[style=numeric, 
citestyle=numeric-comp,
sorting=none,
backend=bibtex]{biblatex}
\title{Convergence rate of stochastic gradient descent under Łojasiewicz conditions and polynomially controlled noise}

\author[1*$\dag$]{Paweł Węgrzyn}

\affil[1]{Department of Power Systems and Environmental Protection Facilities, Faculty of Mechanical Engineering and Robotics, AGH University of Krakow, Kraków, Poland.}

\date{}

\begin{document}
\maketitle
\begin{abstract}
Stochastic gradient methods are widely used for solving optimization problems in many scientific fields. Recently, their popularity has risen due to the increase in machine learning's prominence. Their convergence is often studied under the convexity assumption, which is too restrictive in modern applications. In this paper, we investigate the almost sure convergence rate and expectation under a Łojasiewicz-type gradient domination condition. This framework includes both bounded and decaying noise, as well as noise with controlled polynomial growth. The obtained results extend existing convergence by allowing a broader class of noise sequences and explicitly capturing the effect of their asymptotic behavior on the convergence rate and applicability.
\end{abstract}
\section{Introduction}

Stochastic optimization methods were first introduced by Robbins and Monro \cite{rm}. Polyak et al. \cite{Polyak_1963} further developed  gradient methods and analyzed their convergence. Stochastic gradient descent(SGD) is a fundamental stochastic optimization method. Beyond establishing convergence, another important question is to determine the rate at which the scheme approaches the minimum of the objective function. Such a rate estimate depends on the function's regularity, the scheme parameters, and the gradient noise. Traditional analysis is based on the assumption of convexity or strong convexity of the objective function \cite{bubeck2015convexoptimizationalgorithmscomplexity},\cite{strongconvexity},\cite{stronconvex2}. However, such assumptions are too restrictive in modern applications, especially in machine learning and deep learning. Dereich and Kassing \cite{Dereich_2024} proved that the loss function of a neural network with analytical activation functions is analytic. One of the tools widely used in proving convergence and the convergence rate of optimization schemes is the Łojasiewicz inequality, often referred to as the gradient domination property. An important quality of functions that satisfy the Łojasiewicz condition is the fact proven in \cite{ojasiewicz1965EnsemblesS} that all analytic functions satisfy such a condition. The Łojasiewicz inequality and its variants play an important role in the analysis of dynamical systems and optimization schemes \cite{articlee,articul,Colding_Minicozzi_2015}. Recently, the gradient domination property has found its application in the analysis of policy gradient methods in reinforcement learning \cite{mei2022globalconvergenceratessoftmax,yuan2022generalsamplecomplexityanalysis,ding2024exactgradientsconvergencestochastic} and in statistical learning \cite{Kalogerias_2020}. In this paper, we investigate the convergence rate of stochastic gradient descent(SGD) and its dependence on the magnitude of stochastic noise for the objective functions satisfying global gradient domination property.
\section{Literature review and classification of our contribution}

The noise of the SGD may vary depending on the application. Discussion on analysis for different types of noise can be found in \cite{Kushner_Yin_2003}; however, the most common noise model in the literature is the martingale difference process. For such noise, a common assumption is the so-called ABC condition \cite{abc,abc1}:
    \begin{equation*}
            \mathbb{E}\big[|D_n|^2\mid\mathcal{F}_{n-1}\big]\leq2A(f(X_{n-1})-f(x^*))+B\left|\nabla f(X_{n-1)}\right|^2+C.
        \end{equation*}
Under such an assumption, Liu and Yuan \cite{liu2022sureconvergenceratesstochastic} established an almost sure convergence rate for the objective function satisfying the strong global gradient dominance property, Khaled and Richtárik \cite{khaled2022better} derived the mean convergence rate under the same assumptions. Weismann et al. established almost sure and mean convergence rates for objective functions satisfying the global gradient domination property. 
As Dereich and Kassing \cite{Dereich_2024} proved, almost sure convergence for polynomially bounded conditional moments.
 Tadić \cite{TADIC20151715} proved convergence and established the almost sure convergence rate for objective functions satisfying local gradient domination, under the assumption that there exists $r>1$ such that
        \begin{equation*}
            \limsup_{n\to
            \infty}\max_{k\geq n}\|\sum_{n=1}^k\gamma_n\Gamma_n^rD_n\|<\infty
        \end{equation*}
        almost surely, where $\Gamma_n=\sum_{i=1}^n\gamma_i$. For the objective functions additionally satisfying the global gradient, this noise assumption can be compared to our work. For $\sigma=0$, it requires $\gamma>\frac{3}{4}$, and we go down to $\gamma>\frac{1}{2}$; additionally, for $0<\sigma<\frac{1}{2}$, this assumption requires $\gamma>\frac{3+2\sigma}{4}$, whereas Theorems \ref{tempobyq} and \ref{tempobyq2} yield a less restrictive condition on the step-size parameter, namely, $\gamma>\frac{1+2\sigma}{2}$. For $\beta>\frac{1}{2}$ he obtained a.s. convergence rate of order $\mathcal{O}\left(n^{(-1+\gamma)\left(\min\left\{\frac{r}{\beta},\frac{1}{2\beta-1}\right\}\right)}\right)$. However, in theorem \ref{tempobyq}, we use different assumptions than in previous papers. This is caused by the potential applicability issue; see remark \ref{remark49} and lemma \ref{constantfunctions}. Karandikar and Vidyasagar \cite{KarandikarVidyasagar2024} consider a more general form of biased noise. Let SGD as 
        \begin{equation*}
            X_{n+1}=X_n-\gamma_nh_{n+1}
        \end{equation*}
        where $h_{n+1}=\nabla f({X_{n}})+x_n+\xi_{n+1}$, $x_n$ is the bias and is $\mathcal{F}_n-$measurable and $\xi_{n+1}$ is the unpredictable part. Such assumptions are made
        \begin{align*}
            &\left|x_n\right|\leq \mu_n\left(1+\left|\nabla f(X_n\right|^2\right)\\
           & \E\left[h_{n+1}\mid\mathcal{F}_n\right]=\E\left[\left|\xi_{n+1}\right|^2\mid \mathcal{F}_n\right]\leq M_n^2\left(1+f(X_n)\right) 
        \end{align*}
        where $\mu_n$ and $M_n$ are deterministic. Authors prove almost sure convergence under the Polyak-Łojasiewicz condition, i.e., global gradient dominance with parameter $\beta=\frac{1}{2}$ and conditions $\mu_n=\mathcal{O}\left(n^{-\delta}\right)$,$M_n=\mathcal{O}\left(n^{\sigma}\right)$ but for $\sigma\geq0$ and $\delta>0$. Their result overlaps with our theorem \ref{tempobyq2}; by setting the bias to zero, their result yields an identical convergence rate in the case $\sigma\geq0$; however, it does not directly cover the case when $\sigma<0$, and it does not yield a convergence rate in expectation. A comparison of our theorems with the most important results can be found in table \ref{tab:wyniki}.
\begin{table}[H]
    \centering
    \caption{Comparison of known and new results all under assumption of global gradient dominance, for $\varepsilon$ arbitrarly small dependent on step-size $\gamma$}
    \label{tab:wyniki}
    \begin{tabular}{|p{5cm}|c|p{3cm}|p{2cm}|p{2cm}|}
        \hline
        Article &Convergence &Assumptions & Noise  & Rate \\
        \hline
        Liu and Yuan\cite{liu2022sureconvergenceratesstochastic} Theorem 1 & a.s. &$\beta=\frac{1}{2}$, Global Lipshitz continous gradient  & ABC 
     & o$\left(n^{-1+\varepsilon}\right)$ \\
    \hline
    Khaled and Richtárik \cite{khaled2022better} Theorem 3 & $\E$ &$\beta=\frac{1}{2}$, Global Lipshitz continous gradient& ABC &$o\left(n^{-1+\varepsilon}\right)$ \\ \hline
    Weissmann et al. \cite{weissmann2025sureconvergenceratesstochastic} Theorem 4.1 &a.s., $\E$& $\beta\in\left(\frac{1}{2},1\right]$, Global Lipshitz continous gradient& ABC& $o\left(n^{-1+\varepsilon}\right)$\\
    \hline
    Weissmann et al. \cite{weissmann2025sureconvergenceratesstochastic}
    Theorem 4.1 &a.s., $\E$& $\beta=\frac{1}{2}$, Global Lipshitz continous gradient&ABC & $o\left(n^{-1+\varepsilon}\right)$ \\ \hline
    Fatkhul et al. \cite{fatkulin} Corollary 1. & $\E$& $\beta\in\left(\frac{1}{2},1\right]$, Global  Lipshitz continous gradient& ABC & $o\left(n^{\frac{-1}{4\beta-1}+\varepsilon}\right)$\\
        \hline
        Karandikar and Vidyasagar \cite{KarandikarVidyasagar2024} Theorem 6.2& a.s. & $\beta=\frac{1}{2}$, Global Lipshitz continous gradient & biased noise with unpredictable part & $o\left(n^{-r(\gamma,\sigma,\delta)}\right)$\\ \hline
        Theorem \ref{tempobyq} &a.s., $\E$& $\beta\in\left(\frac{1}{2},1\right]$, Locally Lipshitz continous gradient, process in compact set& polynomially bounded conditional variance& $o\left(n^{-r(\gamma.\beta,\sigma)}\right)$\\
        \hline 
        Theorem \ref{tempobyq2} & a.s., $\E$ & $\beta=\frac{1}{2}$, Globally Lipshitz continous gradient& polynomially bounded conditional variance& $o\left(n^{-r(\gamma,\sigma)}\right)$ \\ \hline  
    \end{tabular}
\end{table}
        
\section{Mathematical background}
In this work, we analyze the complexity of the SGD process for solving minimization problems of the form 
\begin{equation*}
    \min_{x\in\R^d}f(x)
\end{equation*}
where $f:\R^d\mapsto\R$ is possibly a nonconvex and bounded from below i.e. $f^*=\inf_{x\in\R^d}f(x)>-\infty$ function that satisfies the global domination property.
\begin{definition}
    We say that the function $f:\R^d\to\R$ satisfies the global gradient domination property with parameter $\beta\in\left[\frac{1}{2},1\right]$, if there exist parameters $\text{Ł}>0$ such that for all $x\in\R^d$, it holds true that 
    \begin{equation*}
        \left|\nabla f(x)\right|\geq \text{Ł}\left(f(x)-f^*\right)^\beta
    \end{equation*}
    where $f^*=\inf_{x\in\R^d}f(x)>-\infty$.
\end{definition}
\begin{assumption}\label{aslipshitz}
    The objective function $f:\R^d\to \R$ is $\mathcal{C}^1$ and the gradient $\nabla f$ is
    \begin{description}
        \item[I] globally Lipshitz continous if there exists constant $L$ such that for all $x,y\in\R^d$
        \begin{equation*}
            \left|\nabla f(x)-\nabla f(y)\right|\leq L\left|x-y\right|
        \end{equation*}
        \item[II] locally Lipshitz continous if for every  $R>0$ there exist constant $L_R$ such that for all $x,y\in \R^d$ such that $\left|x|,|y\right|\leq R$ 
        \begin{equation*}
            \left|\nabla f(x)-\nabla f(y)\right|\leq L_R\left|x-y\right|.
        \end{equation*}
    \end{description}
\end{assumption}
We provide a descent lemma for functions with globally Lipschitz continuous gradients.
\begin{lemma}\label{Descentlema}
    Let $f:\mathbb{R}^d\to \mathbb{R}$ and $\nabla f$ be globally Lipschitz continuous functions with constant $L>0$. Then the following holds
    \begin{equation*}
        f(y)\leq f(x)+\langle\nabla f(x),y-x\rangle+\frac{L}{2}\left|y-x\right|^2.
    \end{equation*}
\end{lemma}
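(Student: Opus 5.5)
The plan is to use the fundamental theorem of calculus along the segment from $x$ to $y$, and then the Cauchy--Schwarz inequality combined with the global Lipschitz bound on $\nabla f$ from Assumption \ref{aslipshitz} I. Since $f$ is $\mathcal{C}^1$, the function $\phi(t)=f(x+t(y-x))$ is continuously differentiable on $[0,1]$ with $\phi'(t)=\langle\nabla f(x+t(y-x)),y-x\rangle$. Therefore
\begin{equation*}
    f(y)-f(x)=\int_0^1\langle\nabla f(x+t(y-x)),y-x\rangle\,dt .
\end{equation*}

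Next, add and subtract $\nabla f(x)$ inside the integrand. This splits the difference into the linear term $\langle\nabla f(x),y-x\rangle$ plus the remainder
\begin{equation*}
    \int_0^1\langle\nabla f(x+t(y-x))-\nabla f(x),y-x\rangle\,dt .
\end{equation*}

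The remainder is bounded in two steps. First, Cauchy--Schwarz bounds the integrand by $\left|\nabla f(x+t(y-x))-\nabla f(x)\right|\,\left|y-x\right|$. Second, the global Lipschitz condition bounds this product by $L\,t\left|y-x\right|^2$. Integrating $Lt\left|y-x\right|^2$ over $t\in[0,1]$ gives exactly $\frac{L}{2}\left|y-x\right|^2$, which is the claimed inequality.

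There is no serious obstacle. The only point needing care is the justification of the integral representation. This is the chain rule together with the continuity of $\nabla f$, which makes $\phi'$ continuous so that the fundamental theorem of calculus applies. Global Lipschitz continuity is what allows the bound to hold uniformly along the whole segment, whatever the distance between $x$ and $y$.
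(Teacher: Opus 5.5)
Your proof is correct. It is the standard fundamental-theorem-of-calculus argument for the descent lemma, which the paper states without proof as a well-known fact, and your argument uses the Lipschitz bound only along the segment $[x,y]$. It therefore also justifies the paper's later use of the lemma in the proof of Theorem~\ref{tempobyq}, where $\nabla f$ is Lipschitz only on the convex compact set $K$ containing that segment.
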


\subsection{Stochastic gradient descent}
We denote by $\left(\Omega,\mathcal{F},\left(\mathcal{F}_{n}\right)_{n\in \N},\mathbb{P}\right)$ a filtered probability space and $\R^d-$valued stochastic process 
\begin{equation}
    X_n=X_{n-1}+\gamma_n\left(-\nabla f(X_{n-1})+D_n\right).
\end{equation}
for $n\in\N$,where
\begin{enumerate}
    \item $\gamma_n$ is a nonnegative real sequence(step size) that satisfies
    \begin{equation*}
        \sum_{n=1}^\infty\gamma_n=\infty, \quad \sum_{n=1}^\infty\gamma_n^2<\infty,
    \end{equation*}
    the so-called Robbins-Monro conditions\cite{rm}.
    \item $\left(D_n\right)_{n\in\N}$ is a $\left(\mathcal{F}_n\right)_{n\in\N}$-adapted stochastic process, noise.
    \item $X_0$ is a $\mathcal{F}_0$-measurable random variable representing the initial condition. We assume that $\E\left[f(X_0)\right]<\infty$
\end{enumerate}
In this paper, we consider a class of stochastic noise processes whose conditional variance is allowed to vary between iterations.
\begin{assumption}\label{noiseasum}
     Noise $\left(D_n\right)_{n\in \N}$ is a martingale difference process, i.e.,
\begin{equation*}
    \E\left[D_n\mid\mathcal{F}_{n-1}\right]=0,
\end{equation*} such that
\begin{equation*}
    \E\left[\left|D_n\right|^2\mid\mathcal{F}_{n-1}\right]\leq \sigma_n^{2},
\end{equation*}
where $\left(\sigma_n\right)_{n\in\N}=\Theta\left(n^\sigma\right)$ with $\sigma\in \R$.
\end{assumption}
 
\section{Main result}

\subsection{Auxiliary results}
For completeness, we recall four auxiliary results from the literature that will be used further to prove the main theorem.
\begin{lemma}\cite{MA1076}\label{robinssigmund}
    Let $\left(\Omega,\mathcal{F},\left(\mathcal{F}_n\right)_{n
    \in\mathbb{N}},\mathbb{P}\right)$ be a filtered probability space. Let $\left(X_n\right)_{n\in\mathbb{N}}$,$\left(A_n\right)_{n\in\mathbb{N}}$,$\left(B_n\right)_{n\in\mathbb{N}}$,$\left(C_n\right)_{n\in\mathbb{N}}$ be nonnegative $\left(\mathcal{F}_n\right)_{n\in\mathbb{N}}$-adaptive stochastic processes such that:
    \begin{equation*}
        \sum_{n=1}^\infty A_n<\infty, \quad \sum_{n=1}^\infty B_n<\infty
    \end{equation*}
    a.s., suppose that 
    \begin{equation*}
        \mathbb{E}\left[X_{n+1}\mid\mathcal{F}_n\right]\leq\left(1+A_n\right)X_n+B_n-C_n
    \end{equation*}
    holds, then exists random variable $X$ such that $X_n\to X$ a.s. and $\sum_{n=1}^\infty C_n<\infty$ a.s. 
\end{lemma}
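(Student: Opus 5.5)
The statement is the Robbins--Siegmund almost-supermartingale lemma, which the paper recalls from the literature. I will follow the classical route: reduce to a genuine nonnegative supermartingale by normalizing away the multiplicative perturbation, apply the martingale convergence theorem, and then localize to handle the fact that $\sum A_n$ and $\sum B_n$ are only almost surely finite.

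\textbf{Step 1: normalization.} Set
\begin{equation*}
\pi_n=\prod_{k=1}^{n-1}(1+A_k)^{-1},
\end{equation*}
which is $\mathcal{F}_{n-1}$-measurable, positive and nonincreasing. Since $\sum A_n<\infty$ a.s., $\pi_n$ converges a.s. to a strictly positive limit $\pi_\infty$. Define
\begin{equation*}
X_n'=\pi_n X_n,\quad B_n'=\pi_{n+1}B_n,\quad C_n'=\pi_{n+1}C_n .
\end{equation*}
Multiplying the hypothesis by $\pi_{n+1}$ gives $\E[X'_{n+1}\mid\mathcal{F}_n]\le X'_n+B'_n-C'_n$. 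Now put
\begin{equation*}
U_n=X'_n-\sum_{k<n}(B'_k-C'_k).
\end{equation*}
Then $U_n$ is a supermartingale, but it need not be bounded below.

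\textbf{Step 2: localization.} This is the main technical obstacle: $U_n\ge -\sum_{k<n}B'_k$ is bounded below only by a random quantity, so the martingale convergence theorem does not apply directly. For $a>0$ I define the stopping time
\begin{equation*}
\tau_a=\inf\Big\{n:\sum_{k\le n}B'_k>a\Big\},
\end{equation*}
which is a stopping time because $B'_k$ is $\mathcal{F}_k$-measurable. The stopped process $U_{n\wedge\tau_a}+a$ is a nonnegative supermartingale, since before $\tau_a$ the partial sums of $B'$ up to index $n-1$ are at most $a$. By Doob's theorem it converges a.s.

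Because $\sum B'_n\le\sum B_n<\infty$ a.s., we have $\Omega=\bigcup_{a\in\mathbb{N}}\{\tau_a=\infty\}$ up to a null set. Hence $U_n$ converges a.s. on all of $\Omega$.

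\textbf{Step 3: conclusion.} On $\{\tau_a=\infty\}$ we have
\begin{equation*}
X'_n+\sum_{k<n}C'_k=U_n+\sum_{k<n}B'_k .
\end{equation*}
The right-hand side converges, and the left-hand side is a sum of nonnegative terms, so $\sum C'_k<\infty$. Consequently $X'_n$ converges, being $U_n$ plus convergent series.

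Finally I undo the normalization. Since $\pi_n\to\pi_\infty>0$ and $\pi_n\ge\pi_\infty$, we get $\sum C_n\le\pi_\infty^{-1}\sum C'_n<\infty$, and $X_n=X'_n/\pi_n$ converges a.s. to a finite limit $X$.
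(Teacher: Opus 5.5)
The paper gives no proof of this lemma and simply quotes it from Robbins and Siegmund. Your argument is essentially their original proof and is correct: normalizing by $\pi_n=\prod_{k<n}(1+A_k)^{-1}$, compensating with $\sum_{k<n}(B'_k-C'_k)$, stopping at $\tau_a$, applying supermartingale convergence, and letting $a\to\infty$. The one point you gloss over is integrability. Nothing ensures that $\mathbb{E}[X_n]$, $\mathbb{E}[B_n]$ or $\mathbb{E}[C_n]$ are finite, so $U_{n\wedge\tau_a}+a$ is a nonnegative supermartingale only in the extended-conditional-expectation sense. You should therefore apply Doob's theorem on each $\mathcal{F}_1$-measurable set $\{X_1\le m\}$, where $U_1+a=X_1+a$ is bounded, and then let $m\to\infty$.
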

\begin{remark}\cite[Remark 1]{liu2022sureconvergenceratesstochastic}\label{prop1}
Let $\left(X_n\right)_{n\in\mathbb{N}}$,$\left(Y_n\right)_{n\in\mathbb{N}}$ and $\left(Z_n\right)_{n\in\mathbb{N}}$ be $\left(\mathcal{F}_n\right)_{n\in\mathbb{N}}$-adaptive stochastic processes. Let $\left(\zeta_n\right)_{n\in\mathbb{N}}$ be nonnegative real sequence such that: $\prod_{i=1}^\infty\left(1+\zeta_i\right)<\infty$. Suppose that following are true 
\begin{description}
    \item[1.] $X_n,Y_n,Z_n$ are nonnegative for $n\geq 1$ 
    \item[2.] $\mathbb{E}\left[Y_{n+1}\mid\mathcal{F}_n\right]\leq\left(1+\zeta_n\right)Y_n-X_n-Z_n$ for $n\geq 1$
    \item[3.] $\sum_{n=1}^\infty Z_n<\infty$ a.s., 
\end{description}
    then $\sum_{n=1}^\infty X_n<\infty$ a.s. and $Y_n$ converges a.s..
\end{remark}
\begin{lemma}\cite[Lemat A3]{weissmann2025sureconvergenceratesstochastic}\label{lemmaw}
    Let $\left(w_n\right)_{n\in\mathbb{N}}$ be a nonnegative real sequence such that $w_{n+1}\leq(1-a_n)w_n+b_n$ for $n\geq 1$ where $\left(a_n\right)_{n\in\mathbb{N}}$ and $\left(b_n\right)_{n\in\mathbb{N}}$ are nonnegative real sequences satisfying
    \begin{equation*}
        \sum_{n=1}^\infty a_n=\infty, \quad \sum_{n=1}^\infty b_n<\infty,
    \end{equation*}
    then 
    \begin{equation*}
        \lim_{n\to\infty} w_n=0.
    \end{equation*}
\end{lemma}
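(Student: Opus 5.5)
We prove the statement by a comparison argument on the recursion itself, and do not invoke any of the probabilistic tools above. Fix $\varepsilon>0$. Since $\sum_{n} b_n<\infty$, choose $N$ so large that $\sum_{k\geq N} b_k<\varepsilon/2$. For $n\geq N$ we iterate the inequality $w_{n+1}\leq(1-a_n)w_n+b_n$. This requires $1-a_n\geq 0$, so that the inequality is preserved under multiplication. If some $a_n>1$, we instead use $w_{n+1}\leq b_n$ when $a_n\geq 1$, since then $(1-a_n)w_n\leq 0$ because $w_n\geq 0$. Any such index "resets" the bound, and that case only helps. So we first treat the case $a_n\in[0,1]$ for all $n\geq N$ and handle the reset indices separately at the end.

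Assume $a_k\in[0,1]$ for $k\geq N$. Unrolling the recursion gives, for $n>N$,
\begin{equation*}
w_{n}\leq \prod_{k=N}^{n-1}(1-a_k)\,w_N+\sum_{j=N}^{n-1}b_j\prod_{k=j+1}^{n-1}(1-a_k)\leq \prod_{k=N}^{n-1}(1-a_k)\,w_N+\sum_{j=N}^{\infty}b_j .
\end{equation*}
Each inner product is at most $1$ because $1-a_k\in[0,1]$. The elementary bound $1-x\leq e^{-x}$ gives $\prod_{k=N}^{n-1}(1-a_k)\leq \exp\bigl(-\sum_{k=N}^{n-1}a_k\bigr)$. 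This tends to $0$ as $n\to\infty$, because the tail $\sum_{k\geq N}a_k$ still diverges. Hence $\limsup_n w_n\leq \varepsilon/2$. Since $\varepsilon$ was arbitrary and $w_n\geq0$, we conclude $w_n\to 0$.

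For indices $m\geq N$ with $a_m\geq1$, the bound $w_{m+1}\leq b_m$ holds. If there are infinitely many such $m$, we restart the unrolling at the last reset index $m<n$ before $n$; the same estimate then gives $w_n\leq b_m+\sum_{j>m}b_j\leq\varepsilon/2$. If there are only finitely many, we enlarge $N$ past all of them. The main (mild) obstacle is this bookkeeping for $a_n>1$. A cleaner route is to replace $a_n$ by $\tilde a_n=\min\{a_n,1\}$. Since $w_n\geq0$, the inequality $w_{n+1}\leq(1-\tilde a_n)w_n+b_n$ still holds, and $\sum\tilde a_n=\infty$: either infinitely many terms equal $1$, or eventually $\tilde a_n=a_n$. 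The first case then applies verbatim.
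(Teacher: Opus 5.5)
Your proof is correct and complete. The paper gives no proof of this lemma (it is quoted from Weissmann et al.), so there is no in-paper argument to match. What you give is the standard direct unrolling argument.

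The truncation $\tilde a_n=\min\{a_n,1\}$ is the right device. For $a_n>1$ you have $(1-a_n)w_n+b_n\le b_n=(1-\tilde a_n)w_n+b_n$ because $w_n\ge 0$, and $\sum_n\tilde a_n=\infty$ by your dichotomy. With that in place, the bound
\begin{equation*}
w_n\le \exp\Bigl(-\sum_{k=N}^{n-1}\tilde a_k\Bigr)w_N+\sum_{j\ge N}b_j ,\qquad n>N,
\end{equation*}
gives $\limsup_n w_n\le\sum_{j\ge N}b_j$ for every $N$. So the reset bookkeeping in your third paragraph can be dropped, and you could lead with the truncation.

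The alternative closest to the tools the paper already quotes is a deterministic Robbins--Siegmund argument. Write the hypothesis as $w_{n+1}\le w_n+b_n-a_nw_n$. Apply Lemma~\ref{robinssigmund} with $A_n=0$, $B_n=b_n$, $C_n=a_nw_n$; alternatively, note directly that $w_n-\sum_{k<n}b_k$ is nonincreasing and bounded below by $-\sum_k b_k$. Either way, $w_n$ converges to some $w\ge0$ and $\sum_n a_nw_n<\infty$. If $w>0$, then $a_nw_n\ge a_nw/2$ eventually, which would force $\sum_n a_n<\infty$.

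That route is shorter and never needs $a_n\le 1$, but it is purely qualitative. Yours produces the explicit estimate above. That is what you would need to turn the lemma into a quantitative rate in terms of $\sum_{k<n}a_k$ and the tail of $\sum_j b_j$.
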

\begin{lemma}\cite[Lemma A2]{weissmann2025sureconvergenceratesstochastic}\label{modrobinssigmund}
    Let $\left(\Omega,\mathcal{F},\left(\mathcal{F}_n\right)_{n\in\mathbb{N}},\mathbb{P}\right)$ be a filtered probability space and $\left(X_n\right)_{n\in\mathbb{N}}$, $\left(a_n\right)_{n\in\mathbb{N}}$, $\left(b_n\right)_{n\in\mathbb{N}}$, $\left(r_n\right)_{n\in\mathbb{N}}$ be nonnegative $\left(\mathcal{F}_n\right)_{n\in\mathbb{N}}$-adapted stochastic processes such that
    \begin{equation*}
        \sum_{n=1}^{\infty}a_n=\infty, \quad \sum_{n=1}^\infty b_n<\infty \text{ and } r_n>0    \end{equation*} a.s.. Lets assume that
        \begin{equation*}
            \mathbb{E}\left[r_{n+1}X_{n+1}\mid\mathcal{F}_n\right]\leq\left(1-a_n\right)r_nX_n+b_n,
        \end{equation*}
        holds for $n\geq 1$, then $r_nX_n\to0$ a.s..
\end{lemma}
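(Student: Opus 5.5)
The statement to prove is Lemma~\ref{modrobinssigmund}: $r_nX_n\to 0$ almost surely. The plan is to reduce it to the Robbins--Siegmund lemma (Lemma~\ref{robinssigmund}) and then to the deterministic Lemma~\ref{lemmaw}.

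\textbf{Step 1 (a.s. convergence).} Set $Y_n=r_nX_n\geq 0$, which is $\mathcal{F}_n$-measurable. The hypothesis reads
\begin{equation*}
\E\left[Y_{n+1}\mid\mathcal{F}_n\right]\leq \left(1+0\right)Y_n+b_n-a_nY_n .
\end{equation*}
Implicitly $a_n\leq 1$; otherwise replace $a_n$ by $\min\{a_n,1\}$, which keeps $\sum a_n=\infty$ and leaves the inequality valid since $Y_n\geq 0$. Apply Lemma~\ref{robinssigmund} with
\begin{equation*}
A_n=0,\qquad B_n=b_n,\qquad C_n=a_nY_n\geq 0 .
\end{equation*}
The summability conditions hold almost surely by assumption. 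This gives a finite random variable $Y$ with $Y_n\to Y$ a.s., and also $\sum_n a_nY_n<\infty$ a.s.

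\textbf{Step 2 (identify the limit).} Fix $\omega$ in the full-measure event where $Y_n(\omega)\to Y(\omega)$, $\sum a_n(\omega)Y_n(\omega)<\infty$ and $\sum a_n(\omega)=\infty$. Suppose $Y(\omega)>0$. Then $Y_n(\omega)\geq Y(\omega)/2$ for all large $n$, so
\begin{equation*}
\sum_n a_n(\omega)Y_n(\omega)\geq \frac{Y(\omega)}{2}\sum_{n\geq N}a_n(\omega)=\infty,
\end{equation*}
a contradiction. Hence $Y=0$ a.s., that is, $r_nX_n\to0$ a.s. The positivity $r_n>0$ is not actually needed beyond making $r_nX_n$ nonnegative.

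\textbf{Alternative route.} One could instead take expectations to get $w_n=\E[Y_n]$ with $w_{n+1}\leq(1-a_n)w_n+b_n$ and invoke Lemma~\ref{lemmaw}. That only works if $a_n,b_n$ are deterministic, and it only yields convergence in $L^1$, so the pathwise argument above is the main line.

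\textbf{Main obstacle.} The only delicate point is measurability and integrability in Step 1. Since $a_n,b_n$ are random, one must check that Robbins--Siegmund is applicable in its form with adapted random coefficients, which is exactly how Lemma~\ref{robinssigmund} is stated. One must also make sure the a.s. events are intersected correctly in Step 2, which is routine.
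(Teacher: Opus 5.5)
Your argument is correct. The paper gives no proof of its own here, since it imports the lemma from Weissmann et al. Your route is the standard one: Robbins--Siegmund (Lemma~\ref{robinssigmund}) with $A_n=0$, $B_n=b_n$, $C_n=a_nr_nX_n$, followed by the observation that $\sum_n a_n=\infty$ forces the a.s.\ limit to vanish. This is exactly the reasoning the paper uses to finish the proof of Lemma~\ref{liu}. The only superfluous step is the truncation $\min\{a_n,1\}$, because Lemma~\ref{robinssigmund} needs only $C_n=a_nr_nX_n\geq 0$, which holds for any $a_n\geq 0$.
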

\subsection{Convergence lemmas}
To establish the almost sure convergence rate, we employ lemmas based on the Robbins-Siegmund supermartingale theorem. Our approach is based on the methodology used by Weissmann et al. \cite{weissmann2025sureconvergenceratesstochastic} Liu and Yuan \cite{liu2022sureconvergenceratesstochastic} and Sebbouch et al. \cite{pmlr-v134-sebbouh21a} to analyze the convergence of stochastic gradient descent and stochastic heavy ball algorithms under the global gradient domination property.
We present two lemmas needed for the proof of our main result. Both lemmas are adapted from existing results by incorporating additional terms required to account for the noise assumptions considered in the paper.
\begin{lemma}\cite[Lemma 1]{liu2022sureconvergenceratesstochastic}\label{liu}
Let $\left(Y_n\right)_{n\in\mathbb{N}}$ be a sequence of nonnegative random variables such that
\begin{equation}\label{eq6}
    \mathbb{E}\left[Y_{n+1}\mid\mathcal{F}_n\right]\leq\left(1-c_1\gamma_n\right)Y_n+c_2\gamma_n^2\sigma_n^2
\end{equation}
    for each $n\geq 1$, where $c_1,c_2>0$ and $\left(\gamma_n\right)_{n\in\mathbb{N}}=\Theta\left(n^{-\gamma}\right)$, $\left(\sigma_n\right)_{n\in\N}=\Theta\left(n^{\sigma}\right)$, then for $\gamma\in\left(\frac{1}{2},1\right)$ and $\sigma\in\R$ that satisfy
 \begin{equation*}
        2\left(1-\gamma\right)+2\sigma<1, \quad \sigma>\gamma-1
    \end{equation*} and for every $\varepsilon\in\left(2\left(1-\gamma\right)+2\sigma,1\right)$ 
    \begin{equation*}
        \lim_{n\to\infty}n^{1-\varepsilon}Y_n=0, \text{ a.s..}
    \end{equation*}
\end{lemma}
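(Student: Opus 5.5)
We will prove the lemma by rescaling $Y_n$ with a polynomial weight and applying Lemma \ref{modrobinssigmund}. Fix $\varepsilon\in\left(2(1-\gamma)+2\sigma,1\right)$ and set $r_n=n^{1-\varepsilon}$. The target is a recursion of the form
\begin{equation*}
    \E\left[r_{n+1}Y_{n+1}\mid\mathcal{F}_n\right]\leq\left(1-a_n\right)r_nY_n+b_n
\end{equation*}
with $\sum a_n=\infty$ and $\sum b_n<\infty$. Lemma \ref{modrobinssigmund} then gives $r_nY_n\to0$ a.s., which is exactly the claim.

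\textbf{Step 1 (expanding the weight).} Multiply \eqref{eq6} by $r_{n+1}$ and write $r_{n+1}=r_n\left(1+\frac1n\right)^{1-\varepsilon}\leq r_n\left(1+\frac{1-\varepsilon}{n}\right)$, using concavity since $1-\varepsilon\in(0,1)$. This gives
\begin{equation*}
    \E\left[r_{n+1}Y_{n+1}\mid\mathcal{F}_n\right]\leq\left(1+\tfrac{1-\varepsilon}{n}\right)\left(1-c_1\gamma_n\right)r_nY_n+c_2r_{n+1}\gamma_n^2\sigma_n^2 .
\end{equation*}

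\textbf{Step 2 (the contraction factor).} Since $\gamma_n=\Theta(n^{-\gamma})$ with $\gamma<1$, we have $\frac1n=o(\gamma_n)$. Hence for all $n\geq n_0$, with $c_1\gamma_n<1$ there,
\begin{equation*}
    \left(1+\tfrac{1-\varepsilon}{n}\right)\left(1-c_1\gamma_n\right)\leq 1-\tfrac{c_1}{2}\gamma_n .
\end{equation*}
Set $a_n=\frac{c_1}{2}\gamma_n$ for $n\geq n_0$ and $a_n=0$ before. Then $\sum a_n=\infty$ because $\gamma\le1$. For the finitely many $n<n_0$ the factor is bounded by a constant. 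This only affects finitely many steps; the cleanest fix is to apply Lemma \ref{modrobinssigmund} to the shifted sequence starting at $n_0$. Each $Y_{n_0}$ is a.s. finite, since iterating \eqref{eq6} conditionally shows it is integrable whenever $Y_1$ is, or one simply works with the shifted filtration.

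\textbf{Step 3 (summability of the remainder).} Put $b_n=c_2r_{n+1}\gamma_n^2\sigma_n^2$, which is deterministic. Then
\begin{equation*}
    b_n=\Theta\left(n^{1-\varepsilon-2\gamma+2\sigma}\right).
\end{equation*}
This is summable iff $1-\varepsilon-2\gamma+2\sigma<-1$, i.e. $\varepsilon>2(1-\gamma)+2\sigma$, which is precisely the chosen range of $\varepsilon$. That range is nonempty because $2(1-\gamma)+2\sigma<1$. The remaining hypothesis $\sigma>\gamma-1$ is not needed for this argument beyond guaranteeing consistency of the regime; we merely note $\varepsilon<1$ ensures $r_n\to\infty$.

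\textbf{Main obstacle.} The only delicate point is Step 2: absorbing the growth factor of the weight into the contraction. This requires $\frac1n=o(\gamma_n)$, which is where $\gamma<1$ is essential. Handling the initial indices is also needed, and I would do it by shifting the time origin.
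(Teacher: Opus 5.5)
Your proof is correct and follows the paper's route: the same weight $n^{1-\varepsilon}$, the same concavity bound, and the same absorption of $\frac{1-\varepsilon}{n}$ into the $\gamma_n$-contraction. The only differences are that you close with Lemma \ref{modrobinssigmund} where the paper invokes Remark \ref{prop1} and then uses $\sum n^{-\gamma}=\infty$ to force the limit to $0$ (the same argument, packaged differently), and that you verify summability of $b_n$, which the paper leaves implicit. One small correction: the hypothesis $\sigma>\gamma-1$ is exactly what gives $\varepsilon>0$, and hence $1-\varepsilon\in(0,1)$, which your Step 1 uses.
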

\begin{proof}
We assumed that $\gamma_n\sim n^{-\gamma}$; therefore, we can choose $\eta>0$ such that $c_1\gamma_n\geq\frac{\eta}{n^{\gamma}}$ for $n\geq 1$.
    Due to concavity of function $x\to x^{1-\varepsilon}$ we have
    \begin{equation*}
        (n+1)^{1-\varepsilon}\leq n^{1-\varepsilon}+(1-\varepsilon)n^{-\varepsilon}
    \end{equation*}
    combining it with \ref{eq6} we obtain 
    \begin{equation*}
    \begin{split}
               \mathbb{E}\left[(n+1)^{1-\varepsilon}Y_{n+1}\mid\mathcal{F}_n\right]&\leq \left(n+1\right)^{1-\varepsilon}\left(1-c_1\gamma_n\right)Y_n+c_2\left(n+1\right)^{1-\varepsilon}\gamma_n^2\sigma_n^2\\
               &\leq \left(n^{1-\varepsilon}-(1-\varepsilon)n^{-\varepsilon}\right)\left(1-\frac{\eta}{n^{\gamma}}\right)Y_n+c_2(n+1)^{1-\varepsilon}\gamma_n^2\sigma_n^2 \\
               &=\left(1+\frac{1-\varepsilon}{n}-\frac{\eta}{n^{\gamma}}-\frac{\eta(1-\varepsilon)}{n^{1+\gamma}}\right)n^{1-\varepsilon}Y_n\\&+c_2\left(n+1\right)^{1-\varepsilon}\gamma_n^2\sigma_n^2.
    \end{split}
    \end{equation*}
    For $\gamma\in\left(\frac{1}{2},1\right)$ as $n\to\infty$, the dominating factor in the term $ \frac{1-\varepsilon}{n}-\frac{\eta}{n^{\gamma}}-\frac{\eta(1-\varepsilon)}{n^{1+\gamma}}$ is $\frac{\eta}{n^{\gamma}}$; hence, there exists $K>0$ such that for large enough $N>0$ for $n\geq N$, we have
    \begin{equation*}
        \mathbb{E}\left[(n+1)^{1-\varepsilon}Y_{n+1}\mid\mathcal{F}_n\right]\leq n^{1-\varepsilon}Y_n-\frac{\eta}{Kn^{\gamma}}n^{1-\varepsilon}Y_n+c_2\left(n+1\right)^{1-\varepsilon}\gamma_n^2\sigma_n^2.
    \end{equation*}
Notice that for $X_n=\frac{\eta}{Kn^{\gamma}}n^{1-\varepsilon}Y_n$, $Y_n=n^{1-\varepsilon}Y_n$, and $Z_n=c_2\left(n+1\right)^{1-\varepsilon}\gamma_n^2\sigma_n^2$, all conditions of \ref{prop1} are met for $n\geq N$; hence, $n^{1-\varepsilon}Y_n$ converges a.s. and $\sum_{n=1}^\infty X_n<\infty$ a.s.. Because $\gamma<1$, so $\sum_{n=1}^\infty \frac{\eta}{Kn^{\gamma}}=\infty$; hence $n^{1-\varepsilon}Y_n\to 0$ a.s..
\end{proof}
\begin{lemma}\cite[Lemat 3.1]{weissmann2025sureconvergenceratesstochastic}\label{lemmadoconvergeratio}
 Let $\left(\Omega.\mathcal{F},\left(\mathcal{F}_n\right)_{n\in\mathbb{N}},\mathbb{P}\right)$ be a filtered probability space. Let $\left(
    Y_n\right)_{n\in\mathbb{N}}$ be a sequence of nonnegative random variables, and let us assume that there exist $\beta\in \left[\frac{1}{2},1\right]$, $c_1,c_3\geq 0$, and $c_2>0$ such that
    \begin{equation}\label{nierownosc}
        \mathbb{E}\left[Y_{n+1}\mid\mathcal{F}_n\right]\leq\left(1+c_1\gamma_n^2\right)Y_n-c_2\gamma_nY_n^{2\beta}+c_3\gamma_n^2\sigma_n^2
    \end{equation}
for $n\geq 1$ and $\gamma_n=\Theta(n^{-\gamma})$ where $\gamma\in\left(\frac{1}{2},1\right)$ and $\left(\sigma_n\right)_{n\in\N}=\Theta\left(n^\sigma\right)$ where $\sigma\in\R$ such that
\begin{equation*}
    2\left(1-\gamma\right)+2\sigma<1,\quad \sigma>\gamma-1
\end{equation*} Then for any 
\begin{equation*}
\eta=
    \begin{cases}
    \left(
    \max\left\{2-2\gamma+2\sigma,\frac{\gamma+2\beta-2}{2\beta-1}\right\},1\right) \text{ for } \beta\in\left(\frac{1}{2},1\right] \\
    \left(2-2\gamma+2\sigma,1\right) \text{ for }\beta=\frac{1}{2}
\end{cases}
\end{equation*}
we have that $Y_n\to 0$ a.s. and $\lim_{n\to\infty}n^{1-\eta}Y_n=0$.
\end{lemma}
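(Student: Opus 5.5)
First I would establish $Y_n\to0$ almost surely; after that I would turn \eqref{nierownosc} into a recursion of the form treated in Lemma \ref{liu}.

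\textbf{Step 1: almost sure convergence.} Since $\gamma_n^2\sigma_n^2=\Theta(n^{-2\gamma+2\sigma})$ and $2-2\gamma+2\sigma<1$, the sequence $\gamma_n^2\sigma_n^2$ is summable. Also $\sum\gamma_n^2<\infty$. Lemma \ref{robinssigmund} applied to \eqref{nierownosc}, with $A_n=c_1\gamma_n^2$, $B_n=c_3\gamma_n^2\sigma_n^2$ and $C_n=c_2\gamma_nY_n^{2\beta}$, gives $Y_n\to Y$ a.s. and $\sum\gamma_nY_n^{2\beta}<\infty$ a.s. Because $\sum\gamma_n=\infty$, we get $\liminf Y_n^{2\beta}=0$, hence $Y=0$.

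\textbf{Step 2: the case $\beta=\tfrac12$.} Here $Y_n^{2\beta}=Y_n$, and for $n$ large we have $c_1\gamma_n^2\le \tfrac{c_2}{2}\gamma_n$. The inequality therefore becomes
\begin{equation*}
\E[Y_{n+1}\mid\mathcal F_n]\le(1-\tfrac{c_2}{2}\gamma_n)Y_n+c_3\gamma_n^2\sigma_n^2 .
\end{equation*}
This is exactly the hypothesis of Lemma \ref{liu} from some index $N$ onward; shifting indices is harmless. That lemma gives $n^{1-\eta}Y_n\to0$ for every $\eta\in(2-2\gamma+2\sigma,1)$.

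\textbf{Step 3: the case $\beta\in(\tfrac12,1]$.} The term $Y_n^{2\beta}$ is now only superlinear, so I would linearize it with a deterministic threshold. Set $\delta_n=n^{-\theta}$ with $\theta=\frac{1-\gamma}{2\beta-1}$ (possibly slightly smaller). On the event $\{Y_n\ge\delta_n\}$ we have $Y_n^{2\beta}\ge \delta_n^{2\beta-1}Y_n$. On the complement, $Y_n\le\delta_n$, so
\begin{equation*}
\delta_n^{2\beta-1}Y_n - Y_n^{2\beta}\le \delta_n^{2\beta}.
\end{equation*}
In all cases $-Y_n^{2\beta}\le -\delta_n^{2\beta-1}Y_n+\delta_n^{2\beta}$, and hence
\begin{equation*}
\E[Y_{n+1}\mid\mathcal F_n]\le\bigl(1+c_1\gamma_n^2-c_2\gamma_n n^{-\theta(2\beta-1)}\bigr)Y_n+c_3\gamma_n^2\sigma_n^2+c_2\gamma_n n^{-2\beta\theta}.
\end{equation*}
With $\theta(2\beta-1)\le 1-\gamma$, the contraction coefficient satisfies $a_n:=\gamma_n n^{-\theta(2\beta-1)}\gtrsim n^{-1}$, and $c_1\gamma_n^2=o(a_n)$.

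I would then repeat the computation of Lemma \ref{liu}. Multiply by $(n+1)^{1-\eta}$ and use the concavity bound $(n+1)^{1-\eta}\le n^{1-\eta}(1+\frac{1-\eta}{n})$. For this we need $a_n$ to dominate $\frac{1-\eta}{n}$. If $\theta(2\beta-1)<1-\gamma$, this holds automatically. At the borderline $\theta(2\beta-1)=1-\gamma$, $a_n$ is of order $1/n$ with a constant, and one needs $1-\eta$ smaller than that constant. To avoid this constant issue, I would pick $\theta$ slightly below $\frac{1-\gamma}{2\beta-1}$ and absorb the loss into the open interval for $\eta$.

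It then remains to check that the additive terms, multiplied by $n^{1-\eta}$, are summable. For the noise term this means $n^{1-\eta-2\gamma+2\sigma}$ is summable, i.e. $\eta>2-2\gamma+2\sigma$. For the threshold term, $n^{1-\eta-\gamma-2\beta\theta}$ must be summable, i.e. $\eta>2-\gamma-2\beta\theta$. At $\theta=\frac{1-\gamma}{2\beta-1}$,
\begin{equation*}
2-\gamma-\frac{2\beta(1-\gamma)}{2\beta-1}=\frac{\gamma+2\beta-2}{2\beta-1},
\end{equation*}
which matches the stated interval. Remark \ref{prop1} then shows that $n^{1-\eta}Y_n$ converges a.s. and that $\sum a_n n^{1-\eta}Y_n<\infty$ a.s. Since $\sum a_n=\infty$, the limit must be $0$.

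\textbf{Main obstacle.} The delicate step is the choice of $\theta$. It has to be balanced so that two things hold at once: the contraction rate $a_n$ beats the $\frac{1-\eta}{n}$ growth produced by the polynomial weight, while the threshold error $\gamma_n\delta_n^{2\beta}$ remains summable after weighting. The approach is to take $\theta$ strictly below the critical value and pass to the limit, which is possible because $\eta$ ranges over an open interval.
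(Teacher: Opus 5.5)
Your proposal is correct and follows essentially the same route as the paper. The case $\beta=\tfrac12$ is reduced to Lemma \ref{liu}. For $\beta\in(\tfrac12,1]$, your threshold bound $-Y_n^{2\beta}\le-\delta_n^{2\beta-1}Y_n+\delta_n^{2\beta}$ is the paper's Young-type bound on $\gamma_n^{q-1}Y_n-Y_n^{2\beta}$, with $\delta_n^{2\beta-1}$ playing the role of $\gamma_n^{q-1}$, so your condition $\theta(2\beta-1)<1-\gamma$ is the paper's $q\gamma<1$. This is followed by the same weighted supermartingale argument and the same two summability conditions, which give $\eta>\max\{2-2\gamma+2\sigma,\frac{\gamma+2\beta-2}{2\beta-1}\}$.
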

\begin{proof}
Let $\beta=\frac{1}{2}$ then from \ref{nierownosc} we obtain
\begin{equation*}
    \mathbb{E}\left[Y_{n+1}\mid\mathcal{F}_n\right]\leq\left(1+c_1\gamma_n^2-c_2\gamma_n\right)Y_n+c_3\gamma_n^2\sigma_n^2,
\end{equation*}
because $\gamma_n\to 0$, we can choose $\bar{c}_1>0$ such that $c_2\gamma_n-c_1\gamma_n^2\geq \bar{c}_1\gamma_n$ for $n\geq N$. Hence, for all $n\geq N$ 
\begin{equation*}
    \mathbb{E}\left[Y_{n+1}\mid\mathcal{F}_n\right]\leq\left(1-\bar{c}_1\gamma_n\right)Y_n+c_3\gamma_n^2\sigma_n^2.
\end{equation*}
The claim follows from \ref{liu}. Now let $\beta\in\left(\frac{1}{2},1\right]$. Let $q\in\left(1,2\right)$ be arbitrary for now; then \ref{nierownosc} yields
\begin{equation*}
    \begin{split}
\mathbb{E}\left[Y_{n+1}\mid\mathcal{F}_n\right]&\leq\left(1+c_1\gamma_n^2\right)Y_n-c_2\gamma_nY_n^{2\beta}+c_3\gamma_n^2\sigma_n^2\\
&=\left(1+c_1\gamma_n^2-c_2\gamma_n^q\right)Y_n+c_2\gamma_n\left(\gamma_n^{q-1}Y_n-Y_n^{2\beta}\right)+c_3\gamma_n^2\sigma_n^2.
    \end{split}
\end{equation*}
Let $N>0$ and $\tilde{c}_1>0$ such that $c_2\gamma_n^q-c_1\gamma_n^2\geq\tilde{c}_1\gamma_n^q$ for $n\geq N$, hence
\begin{equation}\label{jakieseq}
    \mathbb{E}\left[Y_{n+1}\mid\mathcal{F}_n\right]\leq\left(1-\tilde{c}_1\gamma_n^q\right)Y_n+c_2\gamma_n\left(\gamma_n^{q-1}Y_n-Y_n^{2\beta}\right)+c_3\gamma_n^2\sigma_n^2.
\end{equation}
Notice that the function $x\to ax-bx^{2\beta}$ attains its maximum at $x'=\left(\frac{a}{2b\beta}^{\frac{1}{2\beta-1}}\right)$; therefore
\begin{equation*}
\begin{split}
       \gamma_n\left(\gamma_n^{q-1}Y_n-Y_n^{2\beta}\right)&\leq\frac{\gamma_n^{q+\frac{q-1}{2\beta-1}}}{2\beta-1}-\frac{\gamma_n^{1+\frac{(q-1)2\beta}{2\beta-1}}}{2\beta^{\frac{2\beta}{2\beta-1}}} \\
       &=\left(2\beta\right)^{-\frac{1}{2\beta-1}}\left(1-\frac{1}{2\beta}\right)\gamma_n^{\frac{2q\beta-1}{2\beta-1}}.
\end{split}
\end{equation*}
Let $\bar{c}_2=c_2\left(2\beta\right)^{-\frac{1}{2\beta-1}}\left(1-\frac{1}{2\beta}\right)>0$ and substituting it into \ref{jakieseq} we obtain
\begin{equation*}
    \mathbb{E}\left[Y_{n+1}\mid\mathcal{F}_n\right]\leq\left(1-\tilde{c}_1\gamma_n^q\right)Y_n+\bar{c}_2\gamma_n^{\frac{2q\beta-1}{2\beta-1}}+c_3\gamma_n^2\sigma_n^2.
\end{equation*}
 Now we choose $q$ such that $\frac{1}{2}<\gamma<\frac{1}{q}<1$, and because $\gamma_n\sim\frac{1}{n^{\gamma}}$, we can choose  $c_4$ such that $\tilde{c_1}\gamma_n^{q}\geq\frac{c_4}{n^{q\gamma}}
$ for $n\geq N$. For $\eta\in\left(0,1\right)$, the function $x\mapsto x^{1-\eta}$ is concave; hence, for $n\geq N$ we obtain the following
\begin{equation*}
    \begin{split}
        &\mathbb{E}\left[\left(n+1\right)^{1-\eta}Y_{n+1}\mid\mathcal{F}_n\right]\\&\leq \left(n+1\right)^{1-\eta}\left(1-\tilde{c}_1\gamma_n^q\right)Y_n+(n+1)^{1-\eta}\tilde{c}_2\gamma_N^{\frac{2\beta q-1}{2\beta-1}}+(n+1)^{1-\eta}c_3\gamma_n^{2}\sigma_n^2\\
        &\leq\left(n^{1-\eta}+(1-\eta) n^{-\eta})\right)\left(1-\frac{c_4}{n^{q\gamma}}\right)Y_n+\left(n+1\right)^{1-\eta}\bar{c}_2\gamma_n^{\frac{2\beta q-1}{2\beta-1}}+(n+1)^{1-\eta 
    }c_3\gamma_n^2\sigma_n^2\\
    &=\left(1+\frac{1-\eta}{n}-\frac{c_4}{n^{q\gamma}}-\frac{c_4(1-\eta)}{n^{q\gamma+1}}\right)n^{1-\eta}Y_n+(n+1)^{1-\eta}
\tilde{c}_2\gamma_n^{\frac{2\beta q-1}{2\beta-1}} +(n+1)^{1-\eta 
    }c_3\gamma_n^2\sigma_n^2.   \end{split}
\end{equation*}
Let $\tilde{c}_3=\max\left\{\tilde{c}_2,c_3\right\}$ such that for $n\geq N$
\begin{equation*} 
    \begin{split}
        &\mathbb{E}\left[\left(n+1\right)^{1-\eta}Y_{n+1}\mid\mathcal{F}_n\right]\\&\leq\left(1+\frac{1-\eta}{n}-\frac{c_4}{n^{q\gamma}}-\frac{c_4(1-\eta)}{n^{q\gamma+1}}\right)n^{1-\eta}Y_n+\tilde{c}_3(n+1)^{1-\eta}\left(\gamma_n^{\frac{2\beta q-1}{2\beta-1}}+\gamma_n^2\sigma_n^2\right).
    \end{split}
\end{equation*}
Notice that $q\gamma<1$; therefore, the factor $\frac{c_4}{n^{q\gamma}}$ dominates in the expression $\frac{1-\eta}{n}-\frac{c_4}{n^{q\gamma}}-\frac{c_4(1-\eta)}{n^{q\gamma+1}}$ as $n\to \infty$; hence we can choose $\tilde{c}_4$ and $N_1\geq N$ such that for $n\geq N_1$ we have 
\begin{equation*}
    \mathbb{E}\left[\left(n+1\right)^{1-\eta}Y_{n+1}\mid\mathcal{F}_n\right]\leq\left(1-\tilde{c}_4\frac{1}{n^{q\gamma}}\right)n^{1-\eta}Y_n+\tilde{c}_3(n+1)^{1-\eta}\left(\gamma_n^{\frac{2\beta q-1}{2\beta -1}}+\gamma_n^{2}\sigma_n^2\right).
\end{equation*}
Clearly, because $q\gamma<1$, then 
\begin{equation}\label{szereg1}
    \sum_{n=N_1}^\infty\frac{1}{n^{q\gamma}}=\infty.
\end{equation}
Now lets choose $\gamma$ such that $\gamma>1-\frac{\eta}{2}+\sigma$, then $1-\eta-2\gamma+2\sigma<-1$. Hence
\begin{equation}\label{szereg2}
    \sum_{n=N_1}^\infty n^{1-\eta-2\gamma+2\sigma}<\infty.
\end{equation}
Finally, $\gamma>\frac{(2-\eta)(2\beta-1)}{2\beta q -1}$; then $1-\eta-\frac{\gamma(2\beta q-1)}{2\beta -1}<-1$; therefore
, \begin{equation}\label{szereg3}
    \sum_{n=N_1}^{\infty}n^{1-\eta-\frac{\gamma(2\beta q-1)}{2\beta -1}}<\infty
\end{equation} for all the conditions above, we obtain that
\begin{equation*}
    \gamma\in\left(\max\left\{\frac{(2-\eta)(2\beta-1)}{2\beta q-1},1-\frac{\eta}{2}+\sigma\right\},\frac{1}{q}\right).
\end{equation*}
and by lemma \ref{modrobinssigmund} we have that $\lim_{n\to\infty}n^{1-\eta}Y_n=0$ a.s..
For $\frac{1}{q}\leq \frac{2\beta(2-\eta+2\sigma)}{(2-\eta)(4\beta-1)+2\sigma}$ we obtain the following
\begin{equation*}
    \gamma\in\left(1-\frac{\eta}{2}+\sigma,\frac{1}{q}\right),
\end{equation*}
this set is nonempty if $\frac{1}{q}>\frac{2-\eta+2\sigma}{2}$, which implies $\eta\in\left(\frac{4\beta-2+2\sigma}{4\beta-1},1\right)$.
Now let $\frac{1}{q}>\frac{2\beta(2-\eta+2\sigma)}{(2-\eta)(4\beta-1)+2\sigma}$ then 
\begin{equation*}
     \gamma\in\left(\frac{(2-\eta)(2\beta-1)}{2\beta q-1},\frac{1}{q}\right),
\end{equation*}
this set is nonempty when $\frac{1}{q}<2\beta \eta-2\beta +2-\eta$ so\begin{equation*}
    \frac{1}{q}\in\left(\frac{2\beta(2-\eta+2\sigma)}{(2-\eta)(4\beta-1)+2\sigma},2\beta \eta-2\beta +2-\eta\right)
\end{equation*} which requires $\eta\in\left(\frac{4\beta-2+2\sigma}{4\beta-1},1\right)$. Lets notice that the expression $\frac{4\beta-2+2\sigma}{4\beta-1}$ may take negative values the above condition is still compatible with the previous assumption $\eta\in\left(0,1\right)$ .
Now we will show that for every $\gamma$ there exist $q$ such that both conditions are fulfilled. Taking the first condition, we get
\begin{equation*}
\eta>2-2\gamma+2\sigma.
\end{equation*}
Now lets denote $\frac{1}{q}=x$; the second condition is of the form
\begin{equation*}
    \frac{(2-\eta)(2\beta-1)x}{2\beta-x}<\gamma
\end{equation*}
by taking $x>\gamma$ arbitrarily close to $\gamma$, we obtain
\begin{equation*}
    \eta>\frac{\gamma+2\beta-2}{2\beta-1}
\end{equation*}
combining the results, we obtain the claim
\begin{equation*}
    1>\eta>\max\left\{2-2\gamma+2\sigma,\frac{\gamma+2\beta-2}{2\beta-1}\right\}.
\end{equation*}
\end{proof}
\subsection{Convergence rate theorems}
The theorem of convergence rate is split into two distinct theorems \ref{tempobyq} and \ref{tempobyq2}, Unlike in other papers with similar results \cite{weissmann2025sureconvergenceratesstochastic} and \cite{fatkulin}. It is due to the need for distinct assumptions for the non-trivial application of the theorem \ref{tempobyq} see lemma \ref{constantfunctions}.
\begin{theorem}\label{tempobyq}
   Let Assumptions \ref{aslipshitz}(II) and \ref{noiseasum} hold; additionally, $f$ satisfies the global gradient domination property with $\beta\in\left(\frac{1}{2},1\right]$. Let $\left(\gamma_n\right)_{n\in\mathbb{N}}=\Theta\left( n^{-\gamma}\right)$ for $\gamma\in\left(\frac{1}{2},1\right)$ and $\left(\sigma_n\right)_{n\in\mathbb{N}}=\Theta\left( n^{\sigma}\right)$, where $\sigma\in\mathbb{R}$ such that \begin{equation*}
    2\left(1-\gamma\right)+2\sigma<1, \quad \sigma>\gamma-1.
\end{equation*}
Additionally, we assume that
\begin{equation*}
    \mathbb{P}\left(X_n\in K,\:n\geq0\right)=1,
\end{equation*}
where $K$ is a compact and convex set.
 Then for any
    \begin{equation*}
\eta\in
    \left(\max\left\{2-2\gamma+2\sigma,\frac{\gamma+2\beta-2}{2\beta-1}\right\},1\right)
\end{equation*}
we have 
\begin{description}
    \item[(i)]$f(X_n)-f^*\in  o\left(n^{-1+\eta}\right) $ a.s.
    \item[(ii)]$\mathbb{E}\left[f(X_n)-f^*\right]\in o\left(n^{-1+\eta}\right)$.
\end{description}
\end{theorem}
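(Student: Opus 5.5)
The plan is to reduce the statement to Lemma \ref{lemmadoconvergeratio} applied to $Y_n=f(X_n)-f^*\geq 0$. The first step is a descent inequality. Since $K$ is compact and convex and $X_n\in K$ almost surely, we can fix $R$ with $K\subset\{|x|\leq R\}$ and let $L=L_R$ be the local Lipschitz constant from Assumption \ref{aslipshitz}(II). Because $K$ is convex, the segment between $X_{n-1}$ and $X_n$ lies in $K$, so the proof of Lemma \ref{Descentlema} (integrating $\nabla f$ along that segment) applies with constant $L_R$. This gives
\begin{equation*}
f(X_n)\leq f(X_{n-1})-\gamma_n\langle\nabla f(X_{n-1}),\nabla f(X_{n-1})-D_n\rangle+\frac{L_R\gamma_n^2}{2}\left|-\nabla f(X_{n-1})+D_n\right|^2 .
\end{equation*}

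Next, take conditional expectations with respect to $\mathcal{F}_{n-1}$. Using $\E[D_n\mid\mathcal{F}_{n-1}]=0$ and $\E[|D_n|^2\mid\mathcal{F}_{n-1}]\leq\sigma_n^2$ from Assumption \ref{noiseasum}, we obtain
\begin{equation*}
\E[Y_n\mid\mathcal{F}_{n-1}]\leq Y_{n-1}-\gamma_n\Bigl(1-\tfrac{L_R\gamma_n}{2}\Bigr)|\nabla f(X_{n-1})|^2+\tfrac{L_R}{2}\gamma_n^2\sigma_n^2 .
\end{equation*}
For $n\geq N$ we have $\gamma_n\leq 1/L_R$, so the bracket is at least $1/2$. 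Applying global gradient domination, $|\nabla f|^2\geq \text{Ł}^2 Y^{2\beta}$, then yields
\begin{equation*}
\E[Y_n\mid\mathcal{F}_{n-1}]\leq Y_{n-1}-\tfrac{\text{Ł}^2}{2}\gamma_nY_{n-1}^{2\beta}+\tfrac{L_R}{2}\gamma_n^2\sigma_n^2 .
\end{equation*}
This is inequality \eqref{nierownosc} with $c_1=0$, $c_2=\text{Ł}^2/2$ and $c_3=L_R/2$, up to an index shift, which only changes constants because $\gamma_{n+1}=\Theta(\gamma_n)$. Lemma \ref{lemmadoconvergeratio} then gives $n^{1-\eta}Y_n\to0$ almost surely for every $\eta$ in the stated range, which is (i).

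For (ii), take full expectations, writing $y_n=\E[Y_n]$. The main obstacle is the nonlinear term $\E[Y^{2\beta}]$, which we handle with Jensen's inequality: $\E[Y^{2\beta}]\geq y^{2\beta}$ because $2\beta\geq1$. This gives the deterministic recursion
\begin{equation*}
y_{n+1}\leq y_n-c_2\gamma_n y_n^{2\beta}+c_3\gamma_n^2\sigma_n^2 .
\end{equation*}
We also need $y_n<\infty$. This follows from the boundedness of $f$ on $K$, or alternatively from $\E f(X_0)<\infty$ together with the recursion.

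A deterministic sequence is a special case of the adapted setting, so we can repeat the proof of Lemma \ref{lemmadoconvergeratio} verbatim. That is, we use the bound $\gamma_n(\gamma_n^{q-1}y-y^{2\beta})\leq \bar c_2\gamma_n^{(2q\beta-1)/(2\beta-1)}$ and the concavity estimate for $(n+1)^{1-\eta}$, and then apply Lemma \ref{lemmaw} in place of Lemma \ref{modrobinssigmund} to $w_n=n^{1-\eta}y_n$. The summability conditions \eqref{szereg1}--\eqref{szereg3} are the same as before, so they hold for the same range of $\eta$, and we conclude $n^{1-\eta}y_n\to0$, i.e. (ii).

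The points that need care are two. First, the local Lipschitz constant must be valid along the whole step, which is why the convexity of $K$ is used. Second, the descent factor $1-L_R\gamma_n/2$ must be bounded below, which only needs to hold for $n\geq N$; the finitely many earlier indices do not affect the asymptotic statements.
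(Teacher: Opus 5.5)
Your proposal is correct and follows the paper's proof: a descent inequality on the convex compact $K$ with a single local Lipschitz constant, and gradient domination applied after absorbing the $\gamma_n^2|\nabla f(X_{n-1})|^2$ term (you handle this more carefully than the paper's display \eqref{eq7}). Then Lemma \ref{lemmadoconvergeratio} gives (i) and Lemma \ref{lemmaw} gives (ii). The only variation is in (ii): you take expectations first and use Jensen's inequality before linearizing, whereas the paper bounds $\gamma_{n+1}(\gamma_{n+1}^{q-1}Y_n-Y_n^{2\beta})$ pointwise and then takes expectations of a linear inequality; both routes give the same recursion for $n^{1-\eta}\E[Y_n]$.
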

\begin{proof}
Let $K$ be an arbitrary compact and convex set; then, because 
\begin{equation*}
    \mathbb{P}\left(X_n\in K,n\geq 0\right)=1
\end{equation*}
we can use one Lipschitz constant $L$ for which $\nabla f$ is Lipschitz continuous on $K$.
    Using \ref{Descentlema} and the fact that the segment connecting $X_{+1}$ and $X_n$ lies in K, we obtain
    \begin{equation}\label{tutej}
        f\left(X_{n+1}\right)-f\left(X_n\right)\leq\langle \nabla f\left(X_n\right),-\gamma_{n+1}\left(\nabla f\left(X_n\right)+D_{n+1}\right)\rangle+\frac{L}{2}\gamma_{n+1}^2\left|\nabla f(X_n)+D_{n+1}\right|^2.
    \end{equation}
 Let us denote $Y_n=f(X_n)-f^*$; taking the conditional expectation of both sides of \ref{tutej}, we obtain
\begin{equation*}
    \mathbb{E}\left[Y_{n+1}\mid\mathcal{F}_n\right]\leq Y_n-\gamma_{n+1}\left|\nabla f(X_n)\right|^2+\frac{L\gamma_{n+1}^2}{2}\left(2\left|\nabla f(X_n)\right|^2+2\sigma_{n+1}^2\right).
\end{equation*}
   Now using the fact that $f$ satisfies the Łojasiewicz property, we get
    \begin{equation}\label{eq7}
        \mathbb{E}\left[Y_{n+1}\mid\mathcal{F}_n\right]\leq Y_n-\gamma_{n+1}\text{Ł}^2 Y_n^{2\beta}+L\gamma_{n+1}^2\text{Ł}^2 Y_n^{2\beta}+{L\gamma_{n+1}^2}\sigma_{n+1}^2.
    \end{equation}
Now let $\beta\in\left(\frac{1}{2},1\right]$. From inequality \ref{eq7}, we obtain
\begin{equation*}
    \mathbb{E}\left[Y_{n+1}\mid\mathcal{F}_n\right]\leq Y_n-\left(\gamma_{n+1} -{L\gamma_{n+1}^2} \right)\text{Ł}^2Y_n^{2\beta}+{L\gamma_{n+1}^2}\sigma_{n+1}^2
\end{equation*}
Again, because $\gamma_{n}\to 0$, there exist $K>0$ and $N>0$ such that for $n\geq N$, we have $\gamma_{n+1}\text{Ł}^2-{L\text{Ł}^2\gamma_{n+1}^2}\geq K\gamma_{n+1}$. Consequently
\begin{equation*}
    \mathbb{E}\left[Y_{n+1}\mid\mathcal{F}_n\right]\leq Y_n-\left(K\gamma_{n+1}\right)\text{Ł}^2Y_n^{2\beta}+{L\gamma_{n+1}^2}\sigma_{n+1}^2,
\end{equation*}
by lemma \ref{lemmadoconvergeratio} for $c_1=0$, $c_2=K$ and $c_3=L$ we obtain (i).
Now let $0<q<\frac{1}{\gamma}$ be arbitrary, hence
\begin{equation*}
    \mathbb{E}\left[Y_{n+1}\mid\mathcal{F}_n\right]\leq\left(1-K\text{Ł}^2\right)Y_n+\gamma_{n+1}K\text{Ł}^2\left(\gamma_{n+1}^{q-1}Y_n-Y_n^{2\beta}\right)+{L\gamma_{n+1}^2}\sigma_{n+1}^2.
\end{equation*}
Clearly, at point $x'=\left(\frac{\gamma_{n+1}^q}{2\gamma_{n+1}\beta}\right)^{\frac{1}{2\beta-1}}$, function $x\to \gamma_{n+1}^{q}x-\gamma_{n+1}x^{2\beta}$ attains its maximum; therefore
\begin{equation*}
     \mathbb{E}\left[Y_{n+1}\mid\mathcal{F}_n\right]\leq\left(1-K\text{Ł}^2\gamma_{n+1}^q\right)Y_n+\tilde{c}_2\gamma_{n+1}^{\frac{2\beta q-1}{2\beta-1}}+\frac{L\gamma_{n+1}^2}{2}\sigma_{n+1}^2.
\end{equation*}
Now let $N>0$ such that for $ n\geq N$ we can choose $c_4>0$ such that 
$K\text{Ł}^2\gamma_{n+1}^q\geq \frac{c_4}{n^{q\gamma}}$ and let  $\tilde{c}_3=\max\left\{L,\tilde{c}_2\right\}
$
\begin{equation*}
\begin{split}
        &\mathbb{E}\left[(n+1)^{1-\eta}Y_{n+1}\mid\mathcal{F}_n\right]\leq\\
        &\left(1+\frac{1-\eta}{n}-\frac{c_4}{n^{q\gamma}}+\frac{c_4(1-\eta)}{n^{q\gamma+1}}\right)n^{1-\eta}Y_n+\tilde{c}_3(n+1)^{1-\eta}\left(\gamma_{n+1}^{\frac{2\beta q-1}{2\beta-1}}+\gamma_{n+1}^2\sigma_{n+1}^2\right)
\end{split}
\end{equation*}
now because $q\gamma<1$ we have that factor $\frac{c_4}{n^{q\gamma}}$ is dominant in the expression $\frac{1-\eta}{n}-\frac{c_4}{n^{q\gamma}}-\frac{c_4(1-\eta)}{n^{q\gamma+1}}$ as $n\to \infty$; hence there exist $\tilde{c}_4$ and $N_1\geq N$ such that for $n\geq N_1$ we have
\begin{equation*}
    \mathbb{E}\left[\left(n+1\right)^{1-\eta}Y_{n+1}\mid\mathcal{F}_n\right]\leq\left(1-\tilde{c}_4\frac{1}{n^{q\gamma}}\right)n^{1-\eta}Y_n+\tilde{c}_3(n+1)^{1-\eta}\left(\gamma_{n+1}^{\frac{2\beta q-1}{2\beta -1}}+\gamma_{n+1}^{2}\right).
\end{equation*}
Now, taking the expectation of both sides, we obtain
\begin{equation*}
    \mathbb{E}\left[\left(n+1\right)^{1-\eta}Y_{n+1}\right]\leq\left(1-\tilde{c}_4\frac{1}{n^{q\gamma}}\right)\mathbb{E}\left[n^{1-\eta}Y_n\right]+\tilde{c}_3(n+1)^{1-\eta}\left(\gamma_{n+1}^{\frac{2\beta q-1}{2\beta -1}}+\gamma_{n+1}^{2}\sigma_{n+1}^2\right)
\end{equation*}
notice that \begin{align*}
    &\sum_{n\geq N_1}a_n=\sum_{n\geq N_1}\left(1-\tilde{c}_4\frac{1}{n^{q\gamma}}\right)=\infty\\&
    \sum_{n\geq N_1}b_n=\sum_{n\geq N_1}\tilde{c}_3(n+1)^{1-\eta}\left(\gamma_{n+1}^{\frac{2\beta q-1}{2\beta -1}}+\gamma_{n+1}^{2}\sigma_{n+1}^2\right)<\infty,
\end{align*} for such sequences $\left(a_n\right)_{n\geq N_1}$, $\left(b_n\right)_{n\geq N_1}$, and $w_n=\mathbb{E}\left[n^{1-\eta}Y_n\right]$; by lemma \ref{lemmaw} we get $w_n\to 0$.\end{proof}

\begin{remark}\label{remark49}
    We assume that SGD iterates almost surely remain in a deterministic compact subset $K\subset\R^d$. This assumption allows us to exploit the local Lipschitz continuity of the gradient with a uniform constant. Instead, an assumption of global Lipschitz continuity of the gradient can be made [Theorem 4.1 and Theorem 4.2 \cite{weissmann2025sureconvergenceratesstochastic}], [Corollary 1 \cite{fatkulin}]; however, such an assumption, along with the global gradient domination property and the boundedness from below of the objective function, implies that for $\beta>\frac{1
    }{2}$ the objective function must be constant.
\end{remark}
\begin{lemma}\label{constantfunctions}
    Let Assumption \ref{aslipshitz}(I) hold, and additionally, $f$ satisfies the global gradient domination property with parameter $\beta\in\left(\frac{1}{2},1\right]$. Then $f$ is constant.
\end{lemma}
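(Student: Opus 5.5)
The plan is to combine two opposite bounds on $|\nabla f|$ in terms of $Y(x):=f(x)-f^*$. The global Lipschitz gradient bounds $|\nabla f|$ from above by a square root of $Y$. The gradient domination property bounds it from below by $Y^{\beta}$ with $2\beta>1$. Together these show that $Y$ is uniformly bounded. Running an \emph{ascent} gradient flow then forces $Y$ to blow up in finite time unless $Y\equiv 0$.

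\textbf{Step 1 (upper bound on the gradient).} Fix $x\in\R^d$ and apply Lemma \ref{Descentlema} with $y=x-\frac{1}{L}\nabla f(x)$:
\begin{equation*}
f^*\leq f(y)\leq f(x)-\frac{1}{2L}\left|\nabla f(x)\right|^2 .
\end{equation*}
Hence $\left|\nabla f(x)\right|^2\leq 2L\,Y(x)$ for all $x$.

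\textbf{Step 2 (uniform bound on $Y$).} Combine Step 1 with the gradient domination inequality:
\begin{equation*}
\text{Ł}^2 Y(x)^{2\beta}\leq\left|\nabla f(x)\right|^2\leq 2L\,Y(x).
\end{equation*}
Wherever $Y(x)>0$, this gives $Y(x)^{2\beta-1}\leq 2L/\text{Ł}^2$. Since $\beta>\frac12$, we obtain $0\leq Y\leq M:=(2L/\text{Ł}^2)^{\frac{1}{2\beta-1}}$ on all of $\R^d$. This is the only place where $\beta>\frac12$ enters Step 2; for $\beta=\frac12$ the inequality merely reads $\text{Ł}^2\leq 2L$.

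\textbf{Step 3 (ascent flow and contradiction).} Suppose $Y(x_0)>0$ for some $x_0$. Consider the ODE $\dot x(t)=\nabla f(x(t))$ with $x(0)=x_0$. Since $\nabla f$ is globally Lipschitz, Picard--Lindelöf together with linear growth gives a unique solution on $[0,\infty)$. Along it,
\begin{equation*}
\frac{d}{dt}Y(x(t))=\left|\nabla f(x(t))\right|^2\geq \text{Ł}^2\,Y(x(t))^{2\beta}.
\end{equation*}
In particular $Y(x(t))$ is nondecreasing, so $Y(x(t))\geq Y(x_0)>0$. Set $u(t):=Y(x(t))$ and divide by $u^{2\beta}$. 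Integrating gives
\begin{equation*}
\frac{u(0)^{1-2\beta}-u(t)^{1-2\beta}}{2\beta-1}\geq \text{Ł}^2 t.
\end{equation*}
The left side is bounded by $u(0)^{1-2\beta}/(2\beta-1)$, so this is impossible for $t>u(0)^{1-2\beta}/((2\beta-1)\text{Ł}^2)$. Equivalently, $u$ would blow up in finite time, contradicting both the global existence of the flow and the bound $u\leq M$ from Step 2. Hence $Y\equiv 0$, i.e. $f\equiv f^*$ is constant.

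\textbf{Expected obstacle.} The main care point is Step 3. It needs global existence of the ascent flow, which follows from the global Lipschitz property, and it needs the comparison argument to be valid, which requires $u>0$ so that we may divide by $u^{2\beta}$. Monotonicity of $u$ guarantees this. Steps 1--2 are routine; note that Step 2's bound $u\leq M$ already suffices for the contradiction, but the finite-time blow-up alone also contradicts global existence.
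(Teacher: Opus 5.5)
Your proof is correct. Steps 1--2 coincide with the paper's argument: the descent lemma at $y=x-\frac1L\nabla f(x)$ gives $|\nabla f|^2\le 2L(f-f^*)$, and together with domination this yields $f\le f^*+(2L/\text{Ł}^2)^{1/(2\beta-1)}$.

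You depart from the paper in Step 3. The paper applies Ekeland's variational principle to $g=-f$, which is bounded below precisely because of Step 2. This produces points $v_n$ with $f(v_n)\to\sup f$ and $|\nabla f(v_n)|\to 0$, and gradient domination at these approximate maximizers then forces $\sup f=f^*$. You instead run the ascent flow $\dot x=\nabla f(x)$ and integrate the differential inequality $u'\ge\text{Ł}^2u^{2\beta}$ to get finite-time blow-up.

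The two routes trade off differently. The paper's argument is static: it uses only differentiability of $f$ at the Ekeland points, so it would carry over to settings where no flow is available. However, it genuinely needs the upper bound from Steps 1--2. Your argument uses only elementary ODE facts. As you yourself note, the finite-time blow-up already contradicts global existence of the flow, so Steps 1--2, and with them the descent lemma, are actually dispensable. Your argument therefore proves something slightly stronger than the lemma: any $C^1$ function that is bounded below, satisfies gradient domination with $\beta\in(\frac12,1]$, and has an ascent flow existing for all time is constant. Global existence holds, for instance, whenever $|\nabla f(x)|\le a+b|x|$, by Peano's theorem plus Gr\"onwall, with no Lipschitz condition needed.
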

\begin{proof}
    Function $f$ has a global domination property with $\beta\in\left(\frac{1}{2}.1\right]$, i.e.
    \begin{equation*}
        \text{Ł}\left(f(x)-f^*\right)^\beta\leq\left|\nabla f(x)\right|
    \end{equation*}
 Additionally, we have that $f$ has a globally Lipschitz continuous gradient. From \ref{Descentlema} for $y=x-\frac{1}{L}\nabla f(x)$, we obtain 
    \begin{equation*}
        f\left(x-\frac{1}{L}\nabla f(x)\right)\leq f(x)-\frac{1}{2L}\left|\nabla f(x)\right|^2.
    \end{equation*}
 Now, because $f$ is bounded from below, i.e. $f^*=\inf_{x\in\R^d}f(x)>-\infty$, we have
    \begin{equation*}
        \left|\nabla f(x)\right|^2\leq 2L\left(f(x)-f^*\right).
    \end{equation*}
 Combining this with the global gradient domination property, we get
    \begin{equation*}
        \text{Ł}^2\left(f(x)-f^*\right)^{2\beta}\leq2L\left(f(x)-f^*\right),
    \end{equation*}
    which leads to
    \begin{equation*}
        f(x)\leq\left(\frac{2L}{\text{Ł}^2}\right)^{\frac{1}{2\beta-1}}+f^*.
    \end{equation*}
    So $f$ is also bounded from above, i.e., $M=\sup_{x\in\R^d}f(x)$
 Now, we use Theorem 1.1 \cite{EKELAND1974324} for $g=-f$; we choose $\varepsilon>0 $ and $v$ such that $g(v)\leq g^*+\varepsilon$ and $\lambda=\sqrt{\varepsilon}$, and we obtain
    \begin{align*}
        g(v)&\leq g^*+\varepsilon \\
        g(x)&>g(v)-\frac{\varepsilon}{\sqrt{\varepsilon}}\left|v-x\right|\text{ for $x\neq v$}.
    \end{align*}
    where $g^*=\inf_{x\in\R^d}g(x)=-M$.
     We have that
     \begin{equation*}
      \frac{g(v)-g(x)}{\left|x-v\right|}<\frac{\varepsilon}{\sqrt{\varepsilon}}
     \end{equation*}
     and because $g$ is differentiable, we obtain
     \begin{equation*}
         \left|\nabla g(v)\right|\leq{\sqrt{\varepsilon}}.
     \end{equation*}
     From $g(v)\leq g^*+\varepsilon$, we have $f(v)>M-\varepsilon$; therefore, we obtain that
     \begin{equation*}
         \left|\nabla f(v)\right|\leq\sqrt{\varepsilon},\quad f(v)\geq M-\varepsilon
     \end{equation*}
     by taking $\varepsilon=\frac{1}{n}$ we obtain the sequence of points $\left(v_n\right)_{n
     \in\N}$ such that 
     \begin{equation*}
         f(v_n)\to M,\quad \left|\nabla f(v_n)\right|\to 0.
     \end{equation*}
     But from global gradient domination, we have that
     \begin{equation*}
         \text{Ł}\left(f(v_n)-f^*\right)^\beta\leq\left|\nabla f(v_n)\right|\to 0
     \end{equation*}
     so $f$ is constant.
\end{proof}
\begin{remark}
    Under the assumption that SGD stays in a compact set, the ABC condition is equivalent to uniformly bounded conditional variance, i.e. $\sigma=0$.
\end{remark}
\begin{theorem}\label{tempobyq2} Let Assumption\ref{aslipshitz}(I) and \ref{noiseasum} hold; additionally, $f$ satisfies a global gradient domination property with parameter $\beta=\frac{1}{2}$. Let $\left(\gamma_n\right)_{n\in\mathbb{N}}=\Theta\left( n^{-\gamma}\right)$ for $\gamma\in\left(\frac{1}{2},1\right)$ and $\left(\sigma_n\right)_{n\in\mathbb{N}}=\Theta\left( n^{\sigma}\right)$, where $\sigma\in\mathbb{R}$ such that
\begin{equation*}
    2\left(1-\gamma\right)+2\sigma<1,\quad \sigma>\gamma-1.
\end{equation*}
\end{theorem}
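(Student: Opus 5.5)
The statement as printed stops before its conclusion. I will prove the natural analogue of Theorem \ref{tempobyq} for $\beta=\frac{1}{2}$: for every $\eta\in\left(2-2\gamma+2\sigma,1\right)$,
\begin{equation*}
f(X_n)-f^*\in o\left(n^{-1+\eta}\right) \text{ a.s.}, \qquad \mathbb{E}\left[f(X_n)-f^*\right]\in o\left(n^{-1+\eta}\right).
\end{equation*}
The plan is to derive a one-step recursion for $Y_n=f(X_n)-f^*$, then feed it into Lemma \ref{lemmadoconvergeratio} for the almost sure part and into Lemma \ref{lemmaw} for the expectation part.

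\textbf{Step 1: one-step recursion.} Assumption \ref{aslipshitz}(I) holds globally, so no compact-set hypothesis is needed. Apply Lemma \ref{Descentlema} with $x=X_n$ and $y=X_{n+1}=X_n-\gamma_{n+1}\left(\nabla f(X_n)-D_{n+1}\right)$. Take conditional expectation given $\mathcal{F}_n$. The cross term $\langle\nabla f(X_n),D_{n+1}\rangle$ vanishes because $D_{n+1}$ is a martingale difference. The quadratic term is $\mathbb{E}\left[\left|\nabla f(X_n)-D_{n+1}\right|^2\mid\mathcal{F}_n\right]=\left|\nabla f(X_n)\right|^2+\mathbb{E}\left[|D_{n+1}|^2\mid\mathcal{F}_n\right]$. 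This gives
\begin{equation*}
\mathbb{E}\left[Y_{n+1}\mid\mathcal{F}_n\right]\leq Y_n-\left(\gamma_{n+1}-\frac{L}{2}\gamma_{n+1}^2\right)\left|\nabla f(X_n)\right|^2+\frac{L}{2}\gamma_{n+1}^2\sigma_{n+1}^2 .
\end{equation*}
For $n\geq N$ the bracket is positive, since $\gamma_n\to0$. Using the gradient domination with $\beta=\frac{1}{2}$, $\left|\nabla f(X_n)\right|^2\geq \text{Ł}^2 Y_n$, we get
\begin{equation*}
\mathbb{E}\left[Y_{n+1}\mid\mathcal{F}_n\right]\leq \left(1-c\gamma_{n+1}\right)Y_n+\frac{L}{2}\gamma_{n+1}^2\sigma_{n+1}^2 .
\end{equation*}
Here $c>0$ is a constant and $n\geq N$. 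Because $\gamma_{n+1}=\Theta(n^{-\gamma})$ and $\sigma_{n+1}=\Theta(n^{\sigma})$, the index shift only changes constants.

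\textbf{Step 2: almost sure rate.} The recursion above is exactly \eqref{eq6}, and the case $\beta=\frac{1}{2}$ of Lemma \ref{lemmadoconvergeratio} is Lemma \ref{liu}. The hypotheses $2(1-\gamma)+2\sigma<1$ and $\sigma>\gamma-1$ are assumed in the theorem. Both lemmas are stated for all $n\geq 1$, while the recursion holds only for $n\geq N$; I apply them to the shifted sequence $Y_{N+k}$, which does not affect the limit. This yields $n^{1-\eta}Y_n\to0$ a.s. for every $\eta\in(2-2\gamma+2\sigma,1)$, which is claim (i).

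\textbf{Step 3: rate in expectation.} First, $\mathbb{E}[Y_n]<\infty$ for all $n$: the recursion shows $\mathbb{E}[Y_{n+1}]\leq \mathbb{E}[Y_n]+C\gamma_{n+1}^2\sigma_{n+1}^2$, and $\E[f(X_0)]<\infty$. Next, multiply the recursion by $(n+1)^{1-\eta}$ and use concavity, $(n+1)^{1-\eta}\leq n^{1-\eta}\left(1+\frac{1-\eta}{n}\right)$. Since $\gamma<1$, the term $c\gamma_{n+1}\asymp n^{-\gamma}$ dominates $\frac{1-\eta}{n}$, so for $n\geq N_1$
\begin{equation*}
(n+1)^{1-\eta}\,\mathbb{E}\left[Y_{n+1}\mid\mathcal{F}_n\right]\leq\left(1-\tilde c\, n^{-\gamma}\right)n^{1-\eta}Y_n+C(n+1)^{1-\eta}n^{-2\gamma+2\sigma}.
\end{equation*}
Take full expectations and set $w_n=\mathbb{E}\left[n^{1-\eta}Y_n\right]$, $a_n=\tilde c n^{-\gamma}$, $b_n=C(n+1)^{1-\eta}n^{-2\gamma+2\sigma}$. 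Then $\sum a_n=\infty$ because $\gamma<1$. Also $\sum b_n<\infty$, because $1-\eta-2\gamma+2\sigma<-1$ is equivalent to $\eta>2-2\gamma+2\sigma$. Lemma \ref{lemmaw} then gives $w_n\to0$, which is claim (ii).

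\textbf{Main obstacle.} The substantive content is already in Lemma \ref{liu}, so what remains is bookkeeping. The most delicate points are two. The first is the integrability of $Y_n$, which I would settle by the induction in Step 3 before taking expectations. The second is making sure the decay coefficient $\tilde c n^{-\gamma}$ is not killed by the positive $\frac{1-\eta}{n}$ correction; this holds precisely because $\gamma<1$.
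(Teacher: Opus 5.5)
Your proposal is correct and follows the paper's proof essentially step for step. The descent lemma plus gradient domination with $\beta=\frac{1}{2}$ give the contraction recursion, Lemma \ref{liu} (the $\beta=\frac{1}{2}$ case of Lemma \ref{lemmadoconvergeratio}) gives the almost sure rate, and the concavity bound on $(n+1)^{1-\eta}$ combined with Lemma \ref{lemmaw} gives the rate in expectation. Your expansion of the quadratic term via $\mathbb{E}[D_{n+1}\mid\mathcal{F}_n]=0$ and your integrability check are small refinements the paper omits. One caveat on the check: your contraction recursion holds only for $n\geq N$, so for the finitely many indices with $\gamma_{n+1}>2/L$ you additionally need $|\nabla f(x)|^2\le 2L(f(x)-f^*)$, as in the proof of Lemma \ref{constantfunctions}.
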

Then for any
\begin{equation*}
    \eta\in\left(2-2\gamma+2\sigma,1\right)
\end{equation*}
we have 
\begin{description}
    \item[(i)]$f(X_n)-f^*\in  o\left(n^{-1+\eta}\right) $ a.s.
    \item[(ii)]$\mathbb{E}\left[f(X_n)-f^*\right]\in o\left(n^{-1+\eta}\right)$.
    \end{description}
\begin{proof}
    Substituting $\beta=\frac{1}{2}$ into \ref{eq7} we obtain
\begin{equation}\label{expectedcond}
    \mathbb{E}\left[Y_{n+1}\mid\mathcal{F}_n\right]\leq \left(1-\gamma_{n+1}\text{Ł}^2 +L\gamma_{n+1}^2\text{Ł}^2\right)Y_n +{L\gamma_{n+1}^2}\sigma_{n+1}^2.
\end{equation}
Because $\gamma_{n}\to 0$, there exist $N,c>0$ such that $\text{Ł}^2({L\gamma_{n+1}^2-\gamma_{n+1}})\leq -c\gamma_{n+1}$ for $n>N$. Hence, by lemma \ref{lemmadoconvergeratio} for $c_1=0, c_2=c,c_3=L$, we get (i). From \ref{expectedcond} we obtain
\begin{equation*}
\begin{split}
    \mathbb{E}\left[\left(n+1\right)^{(1-\eta)}Y_{n+1}\right]\leq\left(1-c\gamma_{n+1}\right)\left(n+1\right)^{(1-\eta)}\mathbb{E}\left[Y_n\right]+{L\gamma_{n+1}^2\sigma_{n+1}^2}\left(
    n+1\right)^{(1-\eta)}.
    \end{split}
\end{equation*}
Now because $x\mapsto x^{1-\eta}$ is concave, we have
\begin{equation*}
    \begin{split}
    \mathbb{E}\left[\left(n+1\right)^{(1-\eta)}Y_{n+1}\right]&\leq\left(1-c\gamma_{n+1}\right)\left(n^{(1-\eta)}+(1-\eta)n^{-\eta}\right)\mathbb{E}\left[Y_n\right]+{L\gamma_{n+1}^2\sigma_{n+1}^2}\left(
    n+1\right)^{(1-\eta)}\\
    &=\left(1-c\gamma_{n+1}+\frac{1-\eta}{n}-\frac{c(1-\eta)\gamma_{n+1}}{n}\right)n^{1-\eta}\mathbb{E}\left[Y_n\right]\\&+{L\gamma_{n+1}^2\sigma_{n+1}^2}\left(
    n+1\right)^{(1-\eta)}.
    \end{split}
\end{equation*}
Again, because $\left(\gamma_{n}\right)_{n\in\mathbb{N}}\in\Theta\left(n^{-\gamma}\right)$, there exist $N>0$ and $\tilde{c}$ such that for $n\geq N$, the following holds
\begin{equation*}
    -c\gamma_{n+1}+\frac{1-\eta}{n}-\frac{c(1-\eta)\gamma_{n+1}}{n}\leq -\tilde{c}\gamma_{n+1}
\end{equation*}
finally obtaining
\begin{equation*}
    \mathbb{E}\left[\left(n+1\right)^{(1-\eta)}Y_{n+1}\right]\leq\left(1-\tilde{c}\gamma_{n+1}\right)n^{1-\eta}\mathbb{E}\left[Y_{n}\right]+{L\gamma_{n+1}^2\sigma_{n+1}^2}\left(
    n+1\right)^{(1-\eta)}.
\end{equation*}
By lemma \ref{lemmaw} for \begin{align*}
    \left(w_n\right)_{n\in\mathbb
{N}}&=\left(n^{1-\eta}\mathbb{E}\left[Y_{n}\right]\right)_{n\in\N}\\
\left(a_n\right)_{n\in\mathbb{N}}&=\left(\tilde{c}\gamma_{n+1}\right)_{n
\in\N} \\
\left(b_n\right)_{n\in\mathbb
N}&=\left(\left(n+1\right)^{1-\eta}\sigma_{n+1}^2\gamma_{n+1}^2\right)_{n\in\N}
\end{align*} we obtain \begin{equation*}
    n^{1-\eta}\mathbb{E}\left[Y_{n}\right]\to0.
\end{equation*}
\end{proof}
\begin{remark}
The additional assumption $\sigma>\gamma-1$ is a technical assumption that guaranties $\eta>0$.
\end{remark}
\begin{remark}
    In theorem \ref{tempobyq2}, it is also possible to consider a similar assumption as in theorem \ref{tempobyq}, i.e., a locally Lipschitz continuous gradient and SGD almost surely in a deterministic compact set. However, in this case, function $x\mapsto x^2$ is a non trivial example that satisfies all the assumptions of this theorem.
\end{remark}
\section{Numerical simulations}
To empirically test theorem \ref{tempobyq}(i) we chose the function
\begin{equation*}
    f(x)=e^{h(x)}-1
\end{equation*}
where
\begin{equation*}
    h(x)=\int_0^{|x|}tq(t)dt,\quad q(t)=1+0.9sin(10x).
\end{equation*}
Such a function satisfies all the assumptions with $\beta=1$. In table \ref{tab:simparams} we gathered the global parameters of our simulations.

\begin{table}[H]
    \centering
    \caption{Global simulation parameters}
    \label{tab:simparams}
    \begin{tabular}{|c|c|c|c|c|c|}
    \hline
    $C_\gamma$& $\gamma_n$& $D_n$ & Number of simulations & Steps per trajectory & Initial condition 
    $X_0$\\ \hline 
    0.1 & $n^{-\gamma}$& $\mathcal{N}\left(0,n^{2\sigma}+n^{\sigma}+n^{\frac{4\sigma}{3}}\right)$ & $10^4$ & $10^5$ & $U\left[-2.5,2.5\right]$\\ \hline
    \end{tabular}
\end{table}
Note that the initial condition is independently drawn for each simulation. For each value of the parameter $\gamma$, based on the  assumptions of theorem \ref{tempobyq}, three values of $\sigma$ are chosen as $[\sigma_{max}-10^{-4},0,\sigma_{min}+10^{4}]$, where $\sigma_{max},\sigma_{min}$ are, respectively, the upper and lower bounds given by the assumptions.

\begin{figure}[H]
    \centering
    \begin{subfigure}[t]{0.32\textwidth}
        \centering
        \includegraphics[width=\textwidth]{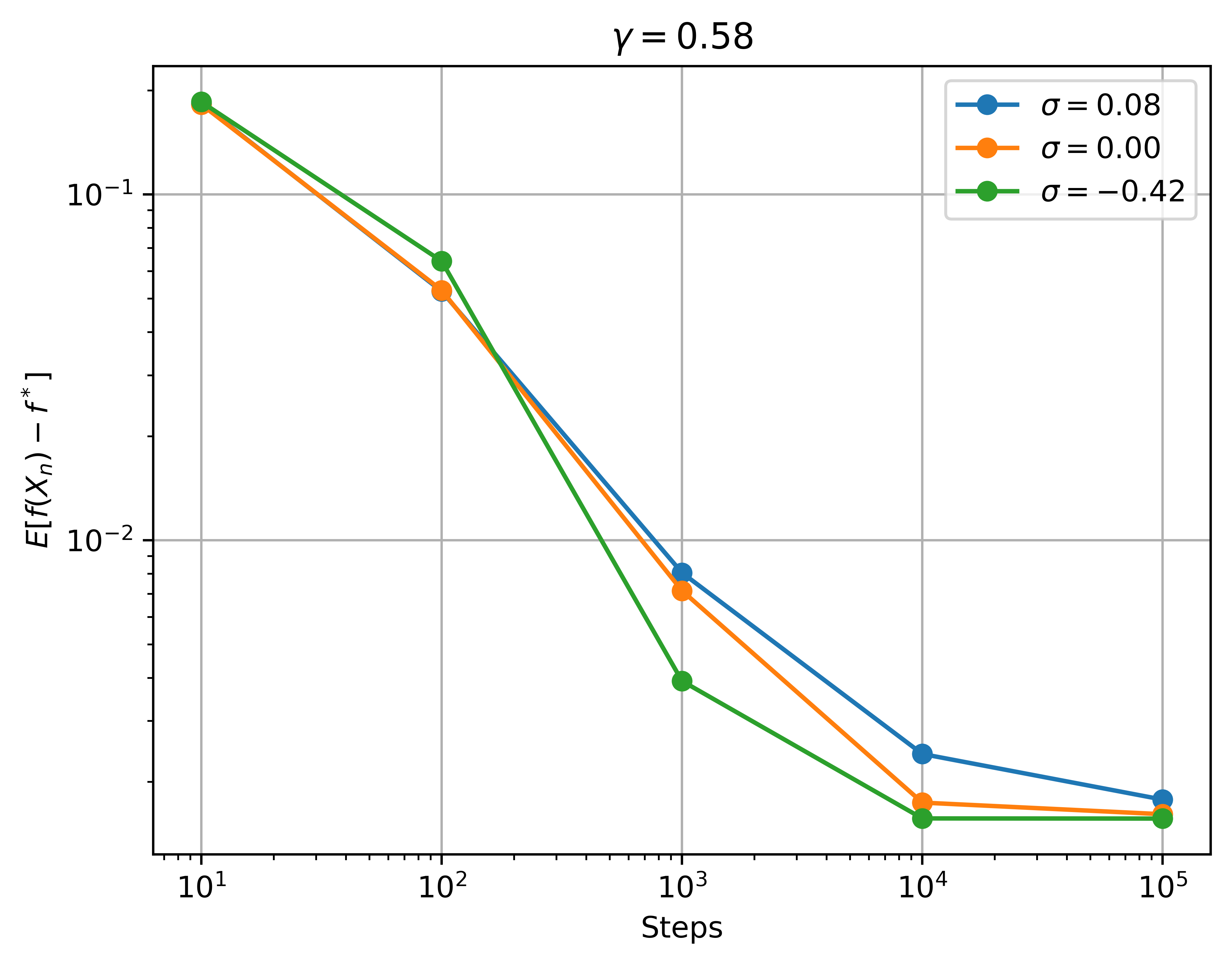}
        \label{fig:wykres1}
    \end{subfigure}
    \hfill
    \begin{subfigure}[t]{0.32\textwidth}
        \centering
        \includegraphics[width=\textwidth]{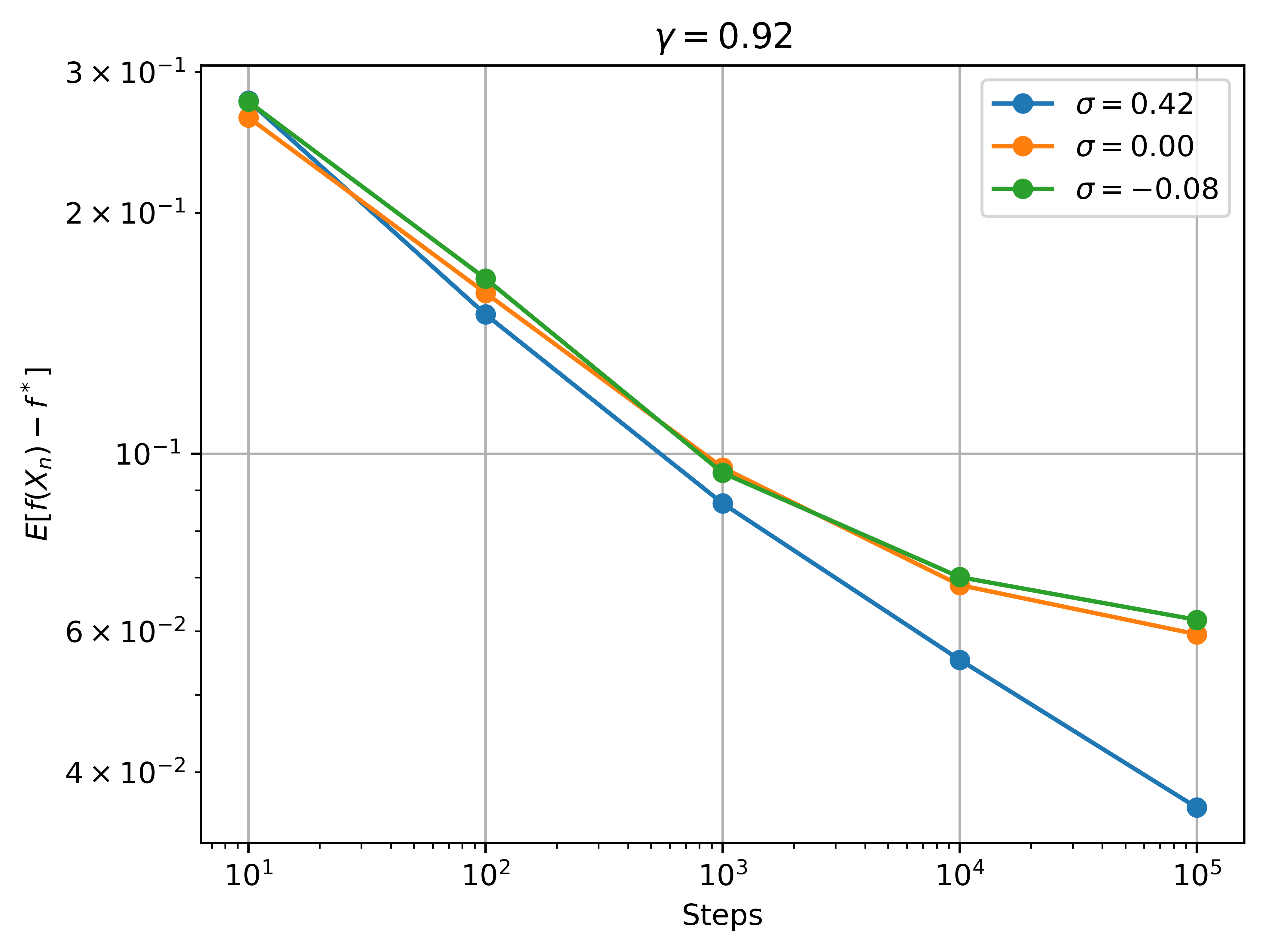}
        \label{fig:wykres2}
    \end{subfigure}
    \hfill
    \begin{subfigure}[t]{0.32\textwidth}
        \centering
        \includegraphics[width=\textwidth]{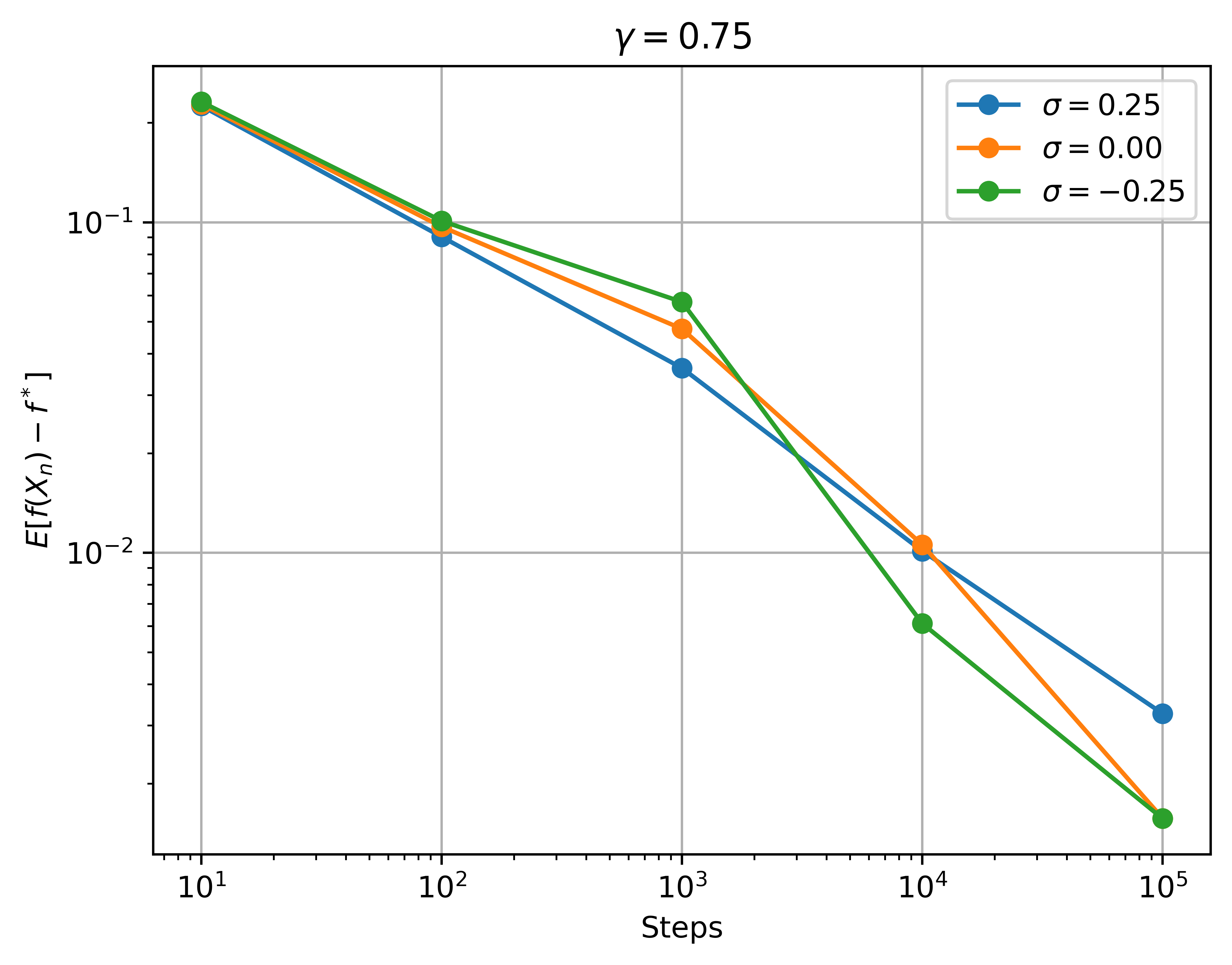}
        \label{fig:wykres3}
    \end{subfigure}

    \caption{Mean error computed from $10^4$ of independent trajectories each for different parameter of $\gamma$ and $\sigma$}
    \label{fig:porownanie}
\end{figure}
In each case, a first order polynomial was fitted to the logarithm of the obtained data using LMS. The results of the simulation are in the table \ref{tab:ros}, interval for parameter $\eta$ was found by the formula of theorem \ref{tempobyq}.

\begin{table}[H]
    \centering
    \caption{Results of simulations}
    \label{tab:ros}
    \begin{tabular}{|c|c|c|c|c|}
    \hline
     $\gamma$& $\sigma$& $\eta$ & slope & intercept \\ \hline
    
    \multirow{3}{*}{$\frac{11}{12}$} & 0.4165 &(0.998,1) &-0.2198 &  -0.8153 \\  \cline{2-5}
  & 0 & (0.9167,1) & -0.1657 &  -1.0591 \\
   \cline{2-5}
    &-0.0832 &(0.9167,1)&-0.1668 & -1.0240 \\
    \hline
    \multirow{3}{*}{$\frac{3}{4}$} &0.2499&(0.9998,1) &-0.4629&-0.3084\\
    \cline{2-5}
    &0&(0.75,1)& -0.5284&0.0781 \\ \cline{2-5}
    &-0.2499&(0.75,1)&-0.5555 &0.2037 \\ \hline 
    \multirow{3}{*}{$\frac{7}{12}$} & 0.0833& (0.9998,1)&-0.5363 & -0.6608\\ \cline{2-5} 
    &0&(0.8333,1)&-0.5587&-0.6131
    \\ \cline{2-5}
    &-0.4165&(0.5833,1) &-0.5757& -0.6010\\ \hline
    \end{tabular}
\end{table}

\section{Conclusions and future research}
In this paper, we analyzed the convergence rate of SGD with step-size $\left(\gamma_n\right)_{n\in \N}=\Theta\left(n^{-\gamma}\right)$ and noise, for which the conditional variance may either increase or decrease at a polynomially controlled rate or stay uniformly bounded along the entire trajectory. 

Two distinct theorems focused on the role of stochastic noise have been proven, one under the assumption of strong global gradient dominance, i.e., for $\beta=\frac{1}{2}$, and global Lipschitz continuity of the gradient \ref{tempobyq2} , and the other for $\beta\in\left(\frac{1}{2},1\right]$ and the local Lipschitz continuity of the gradient paired with the stability of the SGD process, i.e., the process stays in a compact set \ref{tempobyq}. Both results yield the same formulas for the lower boundedness of the convergence rate.

Numerical simulations support the theoretical findings; however, based on our experimental results, a question about the sharpness of the lower bound of $\eta$ in theorem \ref{tempobyq} arises. Further numerical investigation should be conducted, especially in the case of the step-size coefficient $\gamma$ close to the lower-bound, as in this case, noise seems to have a negligible impact on the rate.

Future research should also consider other stochastic optimization schemes such as MSGD and investigate whether similar convergence holds.
\newpage
\printbibliography

\end{document}